\theoremstyle{plain}
\newtheorem{theorem}{Theorem}[section]
\newtheorem{corollary}[theorem]{Corollary}
\newtheorem{proposition}[theorem]{Proposition}
\newtheorem{lemma}[theorem]{Lemma}
\theoremstyle{definition}
\newtheorem{definition}[theorem]{Definition}
\newtheorem{remark}[theorem]{Remark}
\newcommand{\R}{\mathbb{R}}
\newcommand{\C}{\mathbb{C}}
\newcommand{\N}{\mathbb{N}}
\newcommand{\e}{{\rm e}}
\newcommand{\norm}[1]{\left\lVert#1\right\rVert}
\newcommand{\abs}[1]{\left | {#1}\right | }
\newcommand{\all}[2]{ \{\, {#1} \, : \, {#2} \, \} }
\newcommand{\alll}[2]{ \left \{\, {#1} \, : \, {#2} \, \right \} }
\DeclareMathOperator{\Hdist}{Hdist}
\newcommand{\cp}{'}
\begin{document}

\title[Quantitative spectral perturbation theory for compact operators]
{Quantitative
  spectral perturbation theory for compact operators on a Hilbert space}
\author[A.~G\"{u}ven]{Ay\c{s}e G\"{u}ven}
\address{
Ay\c{s}e G\"{u}ven\\
Department of Mathematics\\
Faculty of Science and Arts\\
Ordu University\\
Ordu 52200\\
Turkey.
}
\email{ayseguvensarihan@odu.edu.tr}

\author[O.F.~Bandtlow]{Oscar F.~Bandtlow}
\address{
Oscar F.~Bandtlow\\
School of Mathematical Sciences\\
Queen Mary University of London\\
London E3 4NS\\
UK.
}
\email{o.bandtlow@qmul.ac.uk}

\subjclass[2010]{Primary: 47A55; Secondary: 47A10, 47B07}

\keywords{
Quantitative spectral perturbation theory, 
resolvent bounds, 
departure from normality, 
spectral distance}

\date{May 26, 2020}

\begin{abstract}
We introduce compactness classes of Hilbert space operators by 
grouping together all operators for which the associated 
singular values decay at a certain speed and establish upper bounds for the norm of the resolvent of operators belonging to a particular compactness class.
As a consequence we obtain explicitly computable 
upper bounds for the Hausdorff distance of the spectra of two operators belonging to the same
compactness class
in terms of the
distance of the two operators in operator norm. 
\end{abstract}

\maketitle

\section{Introduction}

Perturbation theory is the study of the behaviour of characteristic 
data of a mathematical object when 
replacing it by a similar nearby object. 
More narrowly, spectral perturbation theory 
is concerned with the change of spectral data of linear operators 
(such as their spectrum, their eigenvalues and corresponding
eigenvectors) when the operators are subjected to a small perturbation. 

There are two sides to spectral perturbation theory, a qualitative
one and 
a quantitative one. 
Qualitative perturbation theory focusses on 
questions such as the continuity, differentiability and analyticity of eigenvalues and 
eigenvectors, 
while quantitative perturbation theory attempts to provide 
computationally accessible bounds for the smallness of the change in the 
spectral data in terms of the smallness of the perturbation. 

The book by Kato \cite{Kat76} is the main reference for spectral perturbation theory, focussing mostly on the qualitative part of the theory.  
Qualitative and quantitative aspects are discussed in the 
 article and book by Chatelin \cite{Cha81, Cha83} and the book by 
Hinrichsen and Pritchard \cite{HP11}.

The present article, located at the interface of functional 
analysis and linear algebra, addresses
the following problem of fundamental importance 
in both qualitative and quantitative 
perturbation theory.  If $A$ and $B$ are two compact operators acting on a separable Hilbert
space which are close, then how close are 
their spectra $\sigma(A)$ and $\sigma(B)$? 

In order to make this question more precise we need to specify metrics 
to measure distances of operators and spectra. Distances of operators
will typically be given by the underlying operator norm $\|\cdot \|$, while
distances of spectra 
will be determined by the Hausdorff metric (see below). 

A standard result in qualitative perturbation theory tells us that if $A$ and $B$ are 
compact operators and 
$\|A-B\|$ becomes vanishingly small, then so 
does the Hausdorff distance of their spectra
 (see, for example, \cite[Theorem~3]{New51}). 
However, this result does not give any quantitative information 
 on how large the Hausdorff distance of $\sigma(A)$ and $\sigma(B)$ is
 when $\|A-B\|$ is small but non-zero. 
 
Quantitative information of this type is interesting in situations
where 
one wants to determine the
spectrum of an arbitrary compact operator $A$ on a separable 
Hilbert space by numerical means. The standard approach to solving
this 
infinite-dimensional problem is to reduce it to a finite-dimensional one. This can, for example, be 
achieved as follows. 
Fix an orthonormal basis $(e_n)_{n\in \N}$ for the Hilbert space 
and define 
orthogonal projections onto the space spanned by the first $k$ basis vectors by setting 
\[ P_kx=\sum_{n=1}^{k}(x, e_n)e_n\,,\] 
where $(\cdot, \cdot)$ denotes the inner product of $H$. Now 
 \[ A_k=P_kAP_k \]
is a finite rank operator, the spectrum of which is in principle
computable, at least to arbitrary precision, since it boils down to
the computation of the 
eigenvalues of a matrix. 
Moreover, it is possible to show that this sequence of finite 
rank operators $(A_k)_{k\in \N}$
converges to $A$ in operator norm (see, for example, \cite[Theorem~4.1]{ALL01}). 
Thus, if quantitative bounds for the Hausdorff distance of the 
spectra of two compact operators are available, then the spectrum 
of $A$ can, in principle, be computed to arbitrary precision. In passing we note that 
the problem of determining 
the spectrum of an arbitrary bounded operator to a given precision is much more 
complicated (see \cite{Hansen}). 

In order to formulate the results of this article we require some notation. 
For $z\in \C$ and a compact subset $\sigma\subset \C$ let  
\[ d(z,\sigma)=\inf_{\lambda\in \sigma}\abs{z-\lambda} \] 
denote the distance of $z$ to $\sigma$. The \emph{Hausdorff distance} $\Hdist(\cdot, \cdot)$, also known as 
the \emph{Pompeiu-Hausdorff distance} 
(see \cite{BP13} for some historical background), 
is the following metric defined on the set of compact
subsets of $\C$
\[
\Hdist(\sigma_1, \sigma_2) 
 = \max\{ \hat d(\sigma_1, \sigma_2), \hat d(\sigma_2, \sigma_1)\}
\]
where 
\[
\hat d(\sigma_1, \sigma_2)= \sup_{\lambda \in\sigma_1} d(\lambda, \sigma_2)\,, 
\] 
and $\sigma_1$ and $\sigma_2$ are two compact
subsets of $\C$. It is easy to see that the Hausdorff distance is 
a metric on the set of compact subsets of $\C$.

Now recall the following notions from matrix perturbation theory: 
for two bounded operators $A$ and $B$, the \textit{spectral variation
of $A$ with respect to $B$} is defined to be 
\[ \hat{d}(\sigma(A), \sigma(B))\,,\]
while the \textit{spectral distance of $A$ and $B$} is 
\[ \Hdist(\sigma(A),\sigma(B)) \]
(see, for example, \cite[Chapter~8, Definition~8.4.1]{Gil03}).

The main concern of the present article is to provide 
explicit upper bounds for the Hausdorff distance 
of the spectra of two arbitrary compact operators $A$ and $B$ 
on a separable Hilbert space in terms of the distance of the two 
operators $A$ and $B$ in operator norm. This will be achieved by grouping together 
all compact operators for which the associated 
singular values decay at a certain speed into a class, 
termed a \emph{compactness class} (see  Definition~\ref{w-compact}).
These classes of operators generalise the exponential 
classes introduced by the second author (see \cite{Ban08}).

Our approach relies on an adaptation of finite-dimensional arguments going back to 
work of Henrici (see \cite{Hen62}), who obtained upper bounds for the spectral variation 
of two matrices as follows. In a first step, an upper bound
for the norm of the resolvent $(zI-A)^{-1}$ of a matrix $A$ is obtained which only 
depends on the distance of $z$ to the spectrum of $A$. This is achieved by writing the 
matrix $A$ as a perturbation of a normal matrix $D$ having the same spectrum as $A$ 
by a nilpotent matrix $N$. Explicit upper bounds for the spectral distance of two matrices 
can then be obtained in a second step, by using an argument going back to Bauer and 
Fike \cite{BF60}, which converts resolvent bounds into spectral distance bounds 
(see Theorem \ref{otbf}).

So far, infinite-dimensional analogues of these bounds have been obtained only for 
certain subclasses of compact operators. To the best of our knowledge, the first results 
in this direction are due to 
Gil\cp, who, in a series of 
papers begun in 1979, obtained spectral variation and distance bounds mostly for 
operators in the Schatten 
classes (see \cite{Gil95, Gil03} and references therein) and more recently for 
operators with inverses in the Schatten classes (see \cite{Gil12, Gil14}).  
Pokrzywa \cite{Pok85} has found similar bounds for operators in symmetrically normed 
ideals, while the second author obtained bounds, simpler and sharper than those of 
Gil\cp\ and Pokrzywa,  
for Schatten class operators \cite{Ban04} and for operators in exponential classes 
\cite{Ban08}. All three authors 
essentially use Henrici's approach to obtain their bounds, by first deriving resolvent 
bounds for quasi-nilpotent operators and 
then using the perturbation argument outlined above. For a completely different 
approach to obtain spectral variation bounds using determinants, see \cite{BG15}. 

This article is organised as follows. 
In Section~\ref{comfirst} we give the precise definition of compactness classes 
determined by 
the speed of decay of the singular values of the operators in the class
and study their functional analytic properties in some detail. In particular we shall find 
sufficient conditions guaranteeing that these classes of operators form quasi-Banach 
operator ideals in the 
sense of Pietsch (see \cite{Pie80, Pie86}).
In Section~\ref{comsecond} we shall use a theorem of Dostani\'{c} \cite{Dos01} to 
produce bounds for the resolvents of 
quasi-nilpotent operators in a given compactness class. Using the technique of Henrici 
discussed 
earlier we then obtain an upper bound for $\norm{(zI-A)^{-1}}$ for an arbitrary operator 
$A$ 
in a given compactness class, which depends only on the asymptotics of the singular 
values of $A$ and the 
distance of $z$ to the spectrum of $A$ (see Theorem~\ref{t1}).
The following Section~\ref{sec:particular} is devoted to studying the
behaviour of the bound for the norm of resolvents derived in the
previous section for two particular families of compactness classes
already in the literature. 
In Section~\ref{comthird}, the general resolvent bounds
obtained in Section~\ref{comsecond} together 
with 
the Bauer-Fike argument will yield the main result
of 
this article, an explicit
upper bound for the spectral distance of two operators in a given 
compactness class, depending only on 
the distance in operator norm of the operators and their 
respective departures from normality (see 
Theorem~\ref{t3}).
To the best of our knowledge, no bound for the spectral distance applicable to arbitrary 
compact operators has appeared in the literature yet. 
 A particular feature of this result is that it
turns 
out to be sharp for normal operators (see 
Remark~\ref{remark:sharpcompactnessbound} (iii)). 
In the final section we will briefly discuss an application of the
main 
result giving circular inclusion regions for pseudospectra of an 
operator in a given compactness class (see Theorem~\ref{th:pseudo}).

\section{Preliminaries}
In this section we fix notation and briefly recapitulate some
facts about compact operators on a Hilbert space which we rely on in the following. 

Let $H_1$ and $H_2$ be separable Hilbert spaces. 
We write $L(H_1, H_2)$ to denote the Banach space of bounded linear operators
from $H_1$ to $H_2$ equipped with the operator norm $\norm{\cdot}$ and 
$S_{\infty}(H_1, H_2)\subset L(H_1, H_2)$ to denote the closed subspace of
compact operators from $H_1$ to $H_2$. If $H=H_1=H_2$ we use the
short-hands $L(H)$ and $S_\infty(H)$ for $L(H_1,H_2)$ and 
$S_\infty(H_1, H_2)$, respectively. 

For $A\in L(H)$  the spectrum and the resolvent
set of $A$ will be denoted
by $\sigma(A)$ and $\rho(A)$, respectively. Moreover,  for 
$z\in \rho(A)$, we write 
$R(A; z)=(zI-A)^{-1}$ for the resolvent of $A$. 

For $A\in S_{\infty}(H)$ we use 
 $\lambda(A)=(\lambda_k(A))_{k\in \N}$ to denote its
eigenvalue sequence, counting algebraic multiplicities and ordered by
decreasing modulus so that 
\[ \abs{\lambda_1(A)}\geq \abs{\lambda_2(A)} \geq \cdots \]
If $A$ has only finitely many non-zero eigenvalues, we set
$\lambda_k(A)=0$ for $k>N$, where $N$ denotes the number of non-zero
eigenvalues of $A$. The symbol $\abs{\lambda(A)}$ 
will denote the sequence $(\abs{\lambda_k(A)})_{k\in \N}$.

Let now $A\in S_{\infty}(H_1,H_2)$. For $k\in \N$, the $k$-th singular value of $A$ is given by 
\[ s_k(A)=\sqrt{\lambda_n(A^*A)} \quad (k \in \N)\,, \]
where $A^*$ denotes the Hilbert space adjoint of $A$. 
For later use, we note that the singular values enjoy the following two properties. 
Given $A\in S_{\infty}(H_3, H_2)$, $B\in L(H_1, H_2)$ and $C\in L(H_4, H_3)$ we have 
\begin{equation}
\label{peqo}
 s_k(BAC)\leq \norm{B}s_k(A) \norm{C} \, \quad (\forall k\in \N) \,,
\end{equation}
while for $A, B \in S_{\infty}(H_1, H_2)$ we have 
  \begin{equation}
\label{ceqs}
 s_{k+l-1}(A+B) \leq s_k(A)+ s_l(B)\, \quad (\forall k,l\in \N)\,.
\end{equation}
Eigenvalues and singular values satisfy a number of inequalities known as Weyl's
inequalities. We give the most important one, known as the multiplicative Weyl inequality
(see \cite [Chapter~VI, Theorem~2.1]{GGK90}).

Let $A \in S_{\infty}(H)$. Then we have
\begin{equation}
\label{twyl1}
\prod_{k=1}^{n}|\lambda_k(A)| \leq \prod_{k=1}^{n}s_k(A)  \quad
(\forall n \in \N) \,.
\end{equation}

For more information about these notions see, for example, \cite{DS63,GK69,Pie86}.

\section{Compactness Classes}
\label{comfirst}
The basic idea to define these classes 
is to group together all compact operators on a separable Hilbert space 
the singular values of which 
decay at a certain speed, quantified by a 
given `weight sequence' (see Definition~\ref{w-compact}). 

The aim of this section is to examine the behaviour of compactness classes 
under addition and multiplication, to show that these classes are 
quasi-Banach operator ideals under suitable conditions on the weight sequence 
and to determine the decay rate of the eigenvalue sequence of
 an operator in a given compactness class. 

We start by defining the notion of a weight sequence. 

\begin{definition} Let 
\[\mathcal W= \all{w: \N\rightarrow \R^+_0}
{\text{$w_k\geq w_{k+1}, \,\forall k \in \N$ and $\lim_{k\to \infty}w_k=0$}}\,.\]
Elements of $\mathcal W$ will be referred to as \emph{weight sequences}, or simply \emph{weights}. 
\end{definition}

Every $w\in \mathcal W$ now gives rise to a compactness class as follows. 

\begin{definition}
\label{w-compact}
Let $w\in\mathcal W$. An operator $A\in S_\infty(H_1,H_2)$ is said to be \textit{$w$-compact} if there is a constant 
$M\geq 0$ such that 
 \begin{equation}
  \label{eq:wcompact}
     s_k(A) \leq M w_k \quad(\forall k\in \N)\,.
  \end{equation}
The infimum over all $M$ such that (\ref{eq:wcompact}) holds 
will be referred to as 
the \textit{$w$-gauge of $A$}
and will be denoted by $|A|_w$. 

The collection of all $w$-compact operators $A\in S_\infty(H_1,H_2)$ will be denoted by $E_w(H_1,H_2)$ or simply by $E_w(H)$ in case $H=H_1=H_2$. 
\end{definition}

For later use we also define the following sequence space analogues of compactness classes. 

\begin{definition}
Given $w\in\mathcal W$, let $\mathcal{E}_w$ denote
the set of all complex-valued sequences $(x_n)_{n\in\N}$ for which there is 
 a constant 
$M\geq 0$ such that 
 \begin{equation}
  \label{eq:wcompact2}
      \abs{x_k}\leq M w_k \quad(\forall k\in \N)\,.
  \end{equation}
The infimum over all $M$ such that (\ref{eq:wcompact2}) holds 
will be referred to as 
the \textit{$w$-gauge of $x$}
and will be denoted by $\abs{x}_w$.
\end{definition}

\begin{remark}
It is not difficult to see that $\mathcal{E}_w$ is a 
Banach space when equipped with the
$w$-gauge $\abs{\cdot}_w$. The situation is different for $E_w$, which need not even be 
a linear space in general
(see Proposition~\ref{p2}). 
\end{remark}
Compactness classes generalise classes that have already appeared in
the literature, such as the 
Schatten-Lorentz ideals $S_{p,\infty}$ (see, for example,
\cite[p.~481]{Pel85}), which correspond to the weights $w_k=k^{-1/p}$
with $p\in (0,\infty)$
or the `exponential classes' studied by Bandtlow (see \cite{Ban08}), 
which correspond to weights of the form 
$w_k=\exp(-ak^\alpha)$ with $a\in (0,\infty)$ and $\alpha \in (0,\infty)$.

We shall now explore some of the properties of $E_w(H_1, H_2)$ 
for a general weight $w$.
We start with the following elementary observation. 

\begin{proposition}\label{p0}
Let $v,w \in \mathcal W$. 
If there exists $M\geq 0$ such that $v_k \leq M w_k$ for every $k \in \N$ and $A \in E_v(H_1, H_2)$, then $A \in E_w(H_1, H_2)$ 
and ${\abs{A}}_w\leq M{\abs{A}}_v\,$. 
\end{proposition}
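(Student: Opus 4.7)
The plan is to argue directly from the definition of the $w$-gauge by chaining the two given inequalities. The key observation I would establish first is that for any $B \in E_v(H_1,H_2)$ and every $k \in \N$, the bound $s_k(B) \leq \abs{B}_v \, v_k$ holds. This is a standard infimum argument: by definition of $\abs{B}_v$, for every $\varepsilon>0$ there is a constant $M_\varepsilon$ with $M_\varepsilon \leq \abs{B}_v + \varepsilon$ and $s_k(B) \leq M_\varepsilon v_k$ for all $k$; letting $\varepsilon \to 0$ yields the claim.

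Applying this to our given $A \in E_v(H_1,H_2)$, I would then combine $s_k(A) \leq \abs{A}_v v_k$ with the hypothesis $v_k \leq M w_k$ to obtain
\[
 s_k(A) \leq \abs{A}_v \, v_k \leq M\abs{A}_v \, w_k \quad (\forall k \in \N).
\]
This shows that the constant $M\abs{A}_v$ is admissible in \eqref{eq:wcompact} for the weight $w$, so $A \in E_w(H_1,H_2)$ and, by the definition of $\abs{A}_w$ as the infimum of all admissible constants, $\abs{A}_w \leq M\abs{A}_v$.

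There is essentially no obstacle here: the only subtle point is verifying that the infimum defining $\abs{\cdot}_v$ is attained in the sense needed to produce a pointwise bound on the singular values, and this follows from the fact that the defining inequality is preserved under taking the infimum over the admissible constants $M$.
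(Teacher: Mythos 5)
Your proof is correct and follows essentially the same route as the paper: chain $s_k(A)\leq \abs{A}_v v_k \leq M\abs{A}_v w_k$ and read off the conclusion from the definition of the $w$-gauge. The only difference is that you spell out the justification of $s_k(A)\leq\abs{A}_v v_k$ (the infimum argument), which the paper uses without comment.
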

\begin{proof}
Suppose $A \in E_v(H_1, H_2)$ and there exists $M\geq 0$ such that $v_k\leq M w_k$ for every $k \in \N$. 
Then we have, for every $k \in \N$, 
 \[s_k(A) \leq {\abs{A}}_vv_k\leq {\abs{A}}_v M w_k \,.\]
 Hence we obtain $A \in E_w(H_1, H_2)$ and ${\abs{A}}_w\leq M{\abs{A}}_v$.
\end{proof}

The observation above motivates defining a partial 
order on $\mathcal W$ as follows 
\[v\preceq w :\iff \exists M \geq 0\quad \text{such that} \quad v_k\leq M w_k \quad (\forall k \in \N)\, .\]
We shall also define an equivalence relation on $\mathcal W$ by setting 
\[v\asymp w :\iff v\preceq w\quad \text{and} \quad w\preceq v \, .\]
Using the above partial order
we obtain the following inclusion. 

\begin{proposition}\label{p1}
Let $\dim H_1=\dim H_2 = \infty$ and let $v,w \in \mathcal W$. Then
\[v\preceq w \iff E_v(H_1, H_2)\subseteq E_w(H_1, H_2)\, .\]
\end{proposition}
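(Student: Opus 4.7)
The plan is to prove the equivalence by handling each direction separately, with the forward implication being essentially immediate and the reverse relying on a concrete construction.

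For the forward direction ($\Rightarrow$), I would simply invoke Proposition~\ref{p0}: the hypothesis $v \preceq w$ is exactly what is needed to conclude that every $A \in E_v(H_1,H_2)$ lies in $E_w(H_1,H_2)$ (indeed with $|A|_w \leq M |A|_v$). So this direction is a one-liner.

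For the reverse direction ($\Leftarrow$), the idea is to exploit the assumption $\dim H_1 = \dim H_2 = \infty$ to build an operator whose singular values are precisely $v_k$, so that membership in $E_w$ directly forces the comparison $v_k \leq M w_k$. Concretely, fix orthonormal bases $(e_n)_{n\in\N}$ of $H_1$ and $(f_n)_{n\in\N}$ of $H_2$ (which exist by separability and infinite-dimensionality), and define $A \in L(H_1,H_2)$ by
\[
A x = \sum_{n=1}^\infty v_n (x,e_n)\, f_n \,.
\]
Since $v_n \to 0$, this is compact, and a direct computation (or a standard singular value decomposition argument) yields $s_k(A) = v_k$ for all $k \in \N$, using that $v$ is already non-increasing. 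Thus $A \in E_v(H_1,H_2)$ with $|A|_v \leq 1$.

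Applying the inclusion hypothesis $E_v(H_1,H_2) \subseteq E_w(H_1,H_2)$ to this $A$ gives a constant $M \geq 0$ with $v_k = s_k(A) \leq M w_k$ for every $k \in \N$, which is exactly $v \preceq w$. The only subtle point — and really the only place the dimension assumption is used — is the construction of the diagonal operator $A$: we need infinitely many mutually orthogonal directions on each side to realise the full sequence $(v_k)_{k\in\N}$ as the singular value sequence. No other obstacle arises, since the identification $s_k(A) = v_k$ is immediate from the orthonormality of the bases $(e_n)$ and $(f_n)$.
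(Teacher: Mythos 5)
Your proof is correct and follows essentially the same approach as the paper: the forward direction by Proposition~\ref{p0}, and the reverse by constructing the diagonal operator $Ae_k = v_k f_k$ with respect to fixed orthonormal bases, so that $s_k(A)=v_k$ and membership in $E_w$ forces $v \preceq w$. Your version is slightly more explicit in writing the operator as a series and noting compactness from $v_n \to 0$, but the argument is the same.
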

\begin{proof}
For the forward implication we need to show that  
if $v\preceq w$ then $E_v(H_1, H_2) \subseteq E_w(H_1, H_2).$
This, however, follows directly from Proposition~\ref{p0}.

For the converse, suppose that $E_v(H_1, H_2)\subseteq E_w(H_1, H_2)$. 
We need to show that $v\preceq w$. 
Fix orthonormal bases $(e_k)_{k \in \N}$ for $H_1$ 
and $(f_k)_{k \in \N}$ for $H_2\,$.  
Define an operator $A\in L(H_1, H_2)$ by setting  $Ae_k=v_kf_k$ for every $k \in \N$. 
We clearly have $s_k(A)=v_k$ for every $k\in \N$, so $A \in E_v(H_1, H_2)$. But since 
$E_v(H_1, H_2)\subseteq E_w(H_1, H_2)$, 
we have $A \in E_w(H_1, H_2)$. Thus there exists $M\geq0$ such that $v_k=s_k(A)\leq M w_k\,$ for every $k\in \N$, so $v\preceq w$ and the backwards implication is proved as well. 
\end{proof}
\begin{corollary}\label{c1}
Let $\dim H_1=\dim H_2 = \infty$ and let $v,w \in \mathcal W$. Then
\[v\asymp w\iff E_v(H_1, H_2) = E_w(H_1, H_2)\,.\]
\end{corollary}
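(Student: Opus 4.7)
The plan is to derive Corollary~\ref{c1} as an immediate consequence of Proposition~\ref{p1}, since the equivalence relation $\asymp$ is defined precisely as the conjunction of $\preceq$ in both directions, while set equality is the conjunction of inclusion in both directions. So the whole argument is a matter of unpacking definitions and invoking the proposition twice.

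More concretely, I would first record that by definition $v \asymp w$ is equivalent to the conjunction of $v \preceq w$ and $w \preceq v$, and that $E_v(H_1, H_2) = E_w(H_1, H_2)$ is equivalent to the conjunction of the two inclusions $E_v(H_1, H_2) \subseteq E_w(H_1, H_2)$ and $E_w(H_1, H_2) \subseteq E_v(H_1, H_2)$. Then I would apply Proposition~\ref{p1} (which requires exactly the hypothesis $\dim H_1 = \dim H_2 = \infty$ assumed here) to each of these two inclusions individually. The forward direction: if $v \asymp w$, then both $v \preceq w$ and $w \preceq v$ hold, so by Proposition~\ref{p1} both inclusions hold and we get set equality. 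The reverse direction: if the two classes are equal, then both inclusions hold, and Proposition~\ref{p1} again yields both $v \preceq w$ and $w \preceq v$, i.e.\ $v \asymp w$.

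There is no real obstacle here, since Proposition~\ref{p1} has already done all the work: the forward direction in Proposition~\ref{p1} relies on the monotonicity observation of Proposition~\ref{p0}, and the converse direction uses the construction of a diagonal operator $Ae_k = v_k f_k$ whose singular values are exactly $v_k$, which is only possible because both Hilbert spaces are infinite-dimensional. Since these two inputs are already incorporated into Proposition~\ref{p1}, the corollary inherits them automatically and no further construction is needed.
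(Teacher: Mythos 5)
Your proof is correct and is exactly the intended argument: the paper states the corollary without proof precisely because it follows by applying Proposition~\ref{p1} in both directions, unpacking $\asymp$ as mutual $\preceq$ and set equality as mutual inclusion.
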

Although $E_w(H_1, H_2)$ is not a linear space in general, it is closed under 
multiplication by scalars and operators, as we shall see presently.  
\begin{lemma} \label{al}
If $A \in E_w(H_1, H_2)$ and $\alpha \in \C$, then 
\[
\alpha A \in E_w(H_1, H_2)\quad \text{and}\quad  {\abs{\alpha A}}_w=\abs{\alpha}{\abs{A}}_w\,.
\]
\end{lemma}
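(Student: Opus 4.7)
The plan is to reduce the statement to the singular-value identity $s_k(\alpha A) = |\alpha| s_k(A)$ and then read off the $w$-gauge equality from the definition. The case $\alpha = 0$ is handled separately and trivially, since $0 \cdot A$ is the zero operator, all of its singular values vanish, and $|0 A|_w = 0 = |0| \cdot |A|_w$, so henceforth I would assume $\alpha \neq 0$.

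For $\alpha \neq 0$, I would establish the singular-value identity by a two-sided application of property~\eqref{peqo}. Taking $B = \alpha I \in L(H_2)$ and $C = I \in L(H_1)$, inequality~\eqref{peqo} yields $s_k(\alpha A) \leq |\alpha| s_k(A)$ for every $k \in \N$. Applying the same inequality to the operator $\alpha A \in E_w(H_1,H_2)$ with $B = \alpha^{-1} I$ gives $s_k(A) = s_k(\alpha^{-1}(\alpha A)) \leq |\alpha|^{-1} s_k(\alpha A)$, which combined with the previous inequality gives the equality $s_k(\alpha A) = |\alpha| s_k(A)$ for every $k \in \N$.

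With this identity in hand, for every $k\in\N$ we have $s_k(\alpha A) = |\alpha| s_k(A) \leq |\alpha| |A|_w w_k$, which shows that $\alpha A$ is $w$-compact and that $|\alpha A|_w \leq |\alpha| |A|_w$. Running the same argument with $\alpha A$ in place of $A$ and $\alpha^{-1}$ in place of $\alpha$ yields $|A|_w = |\alpha^{-1}(\alpha A)|_w \leq |\alpha|^{-1} |\alpha A|_w$, i.e.\ $|\alpha| |A|_w \leq |\alpha A|_w$, completing the proof of equality. There is really no serious obstacle here; the only mild point to be careful about is the separate handling of $\alpha = 0$ so that the inverse $\alpha^{-1}$ used in the sandwich argument is well defined.
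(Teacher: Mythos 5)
Your proof is correct and follows essentially the same route as the paper's: establish $s_k(\alpha A)=|\alpha|s_k(A)$ to get $|\alpha A|_w\leq|\alpha||A|_w$, then apply that inequality with $\alpha^{-1}$ to obtain the reverse. The only minor difference is that you justify the singular-value scaling identity via~\eqref{peqo}, while the paper uses it without comment.
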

\begin{proof}
Let $A \in E_w(H_1, H_2)$ and $\alpha \in \C$. Then 
\[s_k(\alpha A)= |\alpha| s_k(A)\leq |\alpha|{|A|}_w w_k \quad (\forall k\in \N) \,,\] 
so $\alpha A \in E_w(H_1, H_2)$ and
\begin{equation}\label{ver}
{|\alpha A|}_w\leq |\alpha|{|A|}_w\,.
\end{equation}
It remains to prove that ${|\alpha A|}_w\geq |\alpha|{|A|}_w$ for every $\alpha \in \C$.
If $\alpha=0$, then there is nothing to prove. If $\alpha \not =0$, then using (\ref{ver}) we have
\[{|A|}_w={|{\alpha}^{-1}\alpha A|}_w \leq {|{\alpha}^{-1}|}{|\alpha A|}_w \, .\]Therefore
 we obtain, for every $\alpha \in \C$, 
\[|\alpha|{|A|}_w \leq {|\alpha A|}_w\,.\]
\end{proof}

\begin{proposition}\label{ideal}
If $B\in L(H_2, H_1)$, $A \in E_w(H_3, H_2)$ and $C \in L(H_4, H_3)$, then
 ${|BAC|}_w \leq \norm{B}{|A|}_w\norm{C}$. And
\[L(H_2, H_1)E_w(H_3, H_2)L(H_4, H_3)\subseteq E_w(H_4, H_1)\, .\]
\end{proposition}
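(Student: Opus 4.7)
The plan is to reduce the statement directly to the singular value multiplication inequality~\eqref{peqo}, which has already been recorded in the preliminaries. Specifically, given $B\in L(H_2, H_1)$, $A \in E_w(H_3, H_2)$, $C \in L(H_4, H_3)$, the composition $BAC$ lies in $S_\infty(H_4, H_1)$ (since the compact operators form a two-sided ideal in the bounded operators), so it makes sense to talk about its singular values.

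Next, I would apply~\eqref{peqo} to obtain, for every $k \in \N$,
\[
s_k(BAC) \leq \norm{B}\, s_k(A)\, \norm{C}\,,
\]
and then use $A \in E_w(H_3, H_2)$ together with the defining inequality~\eqref{eq:wcompact} to bound $s_k(A) \leq |A|_w\, w_k$. Combining these gives
\[
s_k(BAC) \leq \norm{B}\, |A|_w\, \norm{C}\, w_k \quad (\forall k \in \N)\,,
\]
which shows $BAC \in E_w(H_4, H_1)$ and, by taking the infimum over admissible constants $M$ in the definition of the $w$-gauge, yields ${|BAC|}_w \leq \norm{B}\, {|A|}_w\, \norm{C}$.

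Finally, the inclusion $L(H_2, H_1)E_w(H_3, H_2)L(H_4, H_3)\subseteq E_w(H_4, H_1)$ is just a restatement of the conclusion that $BAC \in E_w(H_4, H_1)$ for arbitrary choices of $B$, $A$, $C$ from these three spaces. There is no real obstacle here; the main point is simply to invoke the singular value inequality~\eqref{peqo} correctly and note that the bound is uniform in $k$, so it passes to the infimum defining the $w$-gauge.
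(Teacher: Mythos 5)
Your proof is correct and follows essentially the same route as the paper: apply the singular value inequality \eqref{peqo}, bound $s_k(A)$ by $|A|_w w_k$, and read off the conclusion. The only small addition is your explicit remark that $BAC$ is compact, which the paper leaves implicit.
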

\begin{proof}
Let $A\in E_w(H_3, H_2)$. By (\ref{peqo}), we obtain
\[s_k(BAC)\leq \norm{B}s_k(A)\norm{C}\leq \norm{B}{\abs{A}}_w\norm{C} w_k\quad (\forall k\in \N)\,.\]
Thus we have 
$BAC \in E_w(H_4, H_1)$ and ${\abs{BAC}}_w \leq \norm{B}{\abs{A}}_w\norm{C}$.
\end{proof}
\begin{remark}
\label{pre-ideal}
Note that Proposition~\ref{ideal} implies that 
\[L(H)E_w(H)L(H)\subseteq E_w(H)\, .\]
Hence $E_w(H)$ satisfies the second condition of 
the definition of an operator ideal (see, for example, \cite[1.1.1]{Pie80})
though not necessarily the first one, concerned with linearity. 
Thus $E_w(H)$ is what is sometimes referred to as a pre-ideal (see, for example, \cite{Nel82}). 
\end{remark}

We shall now investigate the behaviour of 
compactness classes under addition (see Proposition~\ref{p2}). Before doing so we require 
the following definition. 
\begin{definition}
Let $w\in \mathcal W$. Then $\dot w$ is the sequence obtained from $w$ by doubling each entry, that is, 
$\dot{w}=(w_1, w_1, w_2, w_2, w_3, w_3, \ldots)$. 
More precisely, $\dot{w}$ is the sequence given by 
\[
{\dot w}_k = \begin{cases} w_{\frac{k}{2}} &\mbox{if } k \quad\text{is even} \\ 
w_{\frac{k+1}{2}}  & \mbox{if } k \quad\text{is odd}. \end{cases}
\] 
\end{definition}

We are now ready to investigate how compactness classes behave under addition. 
\begin{proposition}\label{p2}
Let $w\in \mathcal{W}$.  
Then the following assertions hold. 
\begin{itemize}
\item[(i)] If $A,B\in E_w(H_1,H_2)$, then $A+B\in E_{\dot{w}}(H_1, H_2)$
with 
\[ |A+B|_{\dot{w}}\leq |A|_w+|B|_w\,.\]
\item[(ii)] If $\dim H_1=\dim H_2=\infty$, then assertion (i) is sharp 
in the sense that if there is $v\in \mathcal{W}$ such that 
 $A+B \in E_v(H_1, H_2)$ for all $A,B \in E_w(H_1, H_2)$, then $\dot w \preceq v$.
\end{itemize}
\end{proposition}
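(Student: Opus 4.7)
My plan is to prove (i) as a direct consequence of the additive Weyl-type inequality (\ref{ceqs}), and (ii) by constructing explicit witnesses $A, B \in E_w(H_1, H_2)$ whose sum has singular value sequence exactly $\dot w$.

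For part (i), I would apply (\ref{ceqs}) with $k = l = m$ to obtain
\[
s_{2m-1}(A+B) \leq s_m(A) + s_m(B) \leq (|A|_w + |B|_w)\, w_m
\]
for all $m \in \N$. Since singular values are non-increasing, the same bound applies to $s_{2m}(A+B)$. Matching this against the definition of $\dot w$ (so that $\dot w_{2m-1} = \dot w_{2m} = w_m$), I get $s_k(A+B) \leq (|A|_w + |B|_w)\, \dot w_k$ for all $k \in \N$, which proves $A+B \in E_{\dot w}(H_1, H_2)$ with the claimed gauge bound.

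For part (ii), the task is sharpness, and the natural approach is to produce operators whose sum has singular values as large as $\dot w$ permits. Fix orthonormal bases $(e_k)_{k \in \N}$ of $H_1$ and $(f_k)_{k \in \N}$ of $H_2$ (which exist since both spaces are infinite-dimensional separable), and define
\[
A e_{2k-1} = w_k f_{2k-1},\quad A e_{2k} = 0,\qquad B e_{2k-1} = 0,\quad B e_{2k} = w_k f_{2k}.
\]
Then $A$ and $B$ are both diagonal-type compact operators whose non-zero singular values, in decreasing order, form the sequence $(w_k)_{k \in \N}$, so $A, B \in E_w(H_1, H_2)$ with $|A|_w = |B|_w = 1$. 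By construction $A + B$ acts diagonally as $(A+B)e_k = \dot w_k f_k$, so $s_k(A+B) = \dot w_k$ for every $k$. Under the hypothesis that $A+B \in E_v(H_1, H_2)$, there exists $M \geq 0$ with $\dot w_k = s_k(A+B) \leq M v_k$ for all $k$, which is exactly $\dot w \preceq v$.

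There is no real obstacle: part (i) is a clean application of (\ref{ceqs}), and for part (ii) the only delicate point is ensuring the constructed $A, B$ have the advertised singular values, which is immediate because they are partial isometries composed with diagonal weight operators relative to orthonormal bases. No further technical work is needed.
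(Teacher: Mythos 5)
Your proof is correct and follows essentially the same route as the paper: part (i) uses the additive inequality (\ref{ceqs}) with $k=l$ plus monotonicity of singular values, and part (ii) constructs the identical pair of ``interleaved'' diagonal operators $A$ and $B$ supported on odd- and even-indexed basis vectors, respectively, so that $s_k(A+B)=\dot w_k$ exactly. No differences worth noting.
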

\begin{proof} \noindent
\begin{itemize}
\item[(i)]
Suppose $A,B \in E_w(H_1, H_2)$. 
Using (\ref{ceqs}) we have 
\[s_{2k-1}(A+B)\leq s_k(A)+s_k(B)\leq ({\abs{A}}_w +{\abs{B}}_w ) w_k=({\abs{A}}_w +{\abs{B}}_w )\dot{w}_{2k-1}\]
since $\dot{w}_{2k-1}=w_k$ for every $k\in \N$.
As the singular values are monotonically decreasing and $\dot{w}_{2k}=w_k$ for every $k \in \N$, 
we obtain
\[s_{2k}(A+B)\leq s_{2k-1}(A+B)\leq ({\abs{A}}_w +{\abs{B}}_w )w_k=({\abs{A}}_w +{\abs{B}}_w )\dot{w}_{2k}\,.\]
Hence we have
\[s_{k}(A+B)\leq ({\abs{A}}_w +{\abs{B}}_w ){\dot{w}}_k \quad(\forall k\in \N)\,.\]
Therefore
\[A+B \in E_{\dot{w}}\quad \text{and}\quad {\abs{A+B}}_{\dot{w}}\leq {\abs{A}}_w+ {\abs{B}}_w \,.\]
\item[(ii)]
Since both $H_1$ and $H_2$ are infinite-dimensional we can choose 
orthonormal bases $(e_k)_{k \in \N}$ for $H_1$ 
and $(f_k)_{k \in \N}$ for $H_2\,$.  
Define an operator $A\in L(H_1, H_2)$ by setting 
\[ 
Ae_k= \begin{cases} 0 & \text{if $k$ is even,} \\ 
w_{\frac{k+1}{2}}f_k  & \text{if $k$ is odd,} \end{cases} 
\]
and an operator $B\in L(H_1, H_2)$ by setting
\[ 
Be_k= \begin{cases} w_{\frac{k}{2}} f_k &\text{if $k$ is even,} \\ 
0  & \text{if $k$ is odd,} \end{cases}  
\]
Clearly, we have 
\[s_k(A)=s_k(B)=w_k \quad (\forall k\in\N) \,,\] so
 $A, B\in S_{\infty}(H_1, H_2)$.
At the same time we have 
\[s_k(A+B)={\dot w}_k \quad (k\in\N)\,,\]  
so $A+B \in E_v(H_1, H_2)$. 
Using the observation above, there exists $M\geq 0$ 
such that, for every $k\in\N$, 
\[{\dot w}_k=s_k(A+B) \leq M v_k\, ,\] which means $\dot w \preceq v$. 
\end{itemize}
\end{proof}
The proposition above implies that $E_w(H_1, H_2)$ is not a linear space in general. However, it points towards 
a simple sufficient condition guaranteeing linearity.  

\begin{corollary}\label{lin}
If $\dot w\asymp w$, then $E_w(H_1, H_2)$ is a linear space and
${\abs{\cdot}}_w$ is a quasi-norm. 
\end{corollary}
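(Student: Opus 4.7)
The plan is to show that, under the hypothesis $\dot w\asymp w$, the three ingredients already established in Lemma~\ref{al}, Proposition~\ref{p2}(i) and Proposition~\ref{p0} combine to give both assertions.

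First I would dispose of closure under scalar multiplication and the corresponding homogeneity axiom: this is exactly the content of Lemma~\ref{al}, which gives $\alpha A\in E_w(H_1,H_2)$ together with $|\alpha A|_w=|\alpha||A|_w$ for every $A\in E_w(H_1,H_2)$ and every $\alpha\in\C$.

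For closure under addition I would invoke Proposition~\ref{p2}(i), which yields $A+B\in E_{\dot w}(H_1,H_2)$ together with $|A+B|_{\dot w}\leq |A|_w+|B|_w$ for $A,B\in E_w(H_1,H_2)$. Since $\dot w\asymp w$, in particular $\dot w\preceq w$, there is a constant $M\geq 0$ such that $\dot w_k\leq M w_k$ for all $k\in \N$. Proposition~\ref{p0} then gives $E_{\dot w}(H_1,H_2)\subseteq E_w(H_1,H_2)$ and $|A+B|_w\leq M|A+B|_{\dot w}$, so that chaining the two estimates yields the quasi-triangle inequality
\[
|A+B|_w\leq M\bigl(|A|_w+|B|_w\bigr).
\]
This simultaneously shows that $E_w(H_1,H_2)$ is closed under addition and establishes the quasi-triangle inequality for $|\cdot|_w$ with constant $M$.

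To conclude that $|\cdot|_w$ is a quasi-norm it only remains to verify the non-degeneracy axiom, since non-negativity is obvious from the definition of the $w$-gauge. If $|A|_w=0$, then $s_k(A)\leq 0\cdot w_k=0$ for every $k\in\N$, so all singular values of $A$ vanish and hence $A=0$. No step here presents a genuine obstacle; the corollary is really a matter of packaging the three preceding results, with the equivalence $\dot w\asymp w$ acting as precisely the bridge that identifies $E_{\dot w}(H_1,H_2)$ with $E_w(H_1,H_2)$ and converts the subadditivity of $|\cdot|_{\dot w}$ into a quasi-subadditivity of $|\cdot|_w$.
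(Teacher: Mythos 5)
Your proposal is correct and follows essentially the same route as the paper: Lemma~\ref{al} for scalar closure, Proposition~\ref{p2}(i) for the additive estimate in $|\cdot|_{\dot w}$, and the equivalence $\dot w\asymp w$ (via Proposition~\ref{p0}) to transfer that estimate into a quasi-triangle inequality for $|\cdot|_w$. The only cosmetic difference is that you make the invocation of Proposition~\ref{p0} explicit and also verify non-degeneracy, which the paper omits as trivial; both are harmless additions and the substance of the argument is identical.
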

\begin{proof}
By Lemma \ref{al}, we have $\alpha A \in E_w(H_1, H_2)$ for every $\alpha \in \C$ 
and $A \in E_w(H_1, H_2)$.
Moreover, using Proposition \ref{p2} and the assumption $\dot w\asymp w$ we have
\[A + B \in E_{\dot w}(H_1, H_2)=E_w(H_1, H_2)\,.\]
Thus $E_w(H_1, H_2)$ is a linear space. 
It remains to show that ${|\cdot |}_w$ is a quasi-norm. The only non-trivial property is the quasi-triangle inequality, that is, we need to show that there is $M>0$ such that 
\[ {\abs{A+B}}_w \leq M ({\abs{A}}_w+{\abs{B}}_w) \quad (\forall A,B\in E_w(H_1,H_2))\,.\]  
In order to see this note that, since $\dot w\asymp w$ there 
exists $M\geq 1$ such that 
\[
\frac{1}{M} {\abs{A}}_w \leq {\abs{A}}_{\dot w} \leq M{\abs{A}}_w\] 
for every $A \in E_w(H_1, H_2)=E_{\dot w}(H_1, H_2)$. Since $A, B \in E_w(H_1, H_2)$ then,
by Proposition~\ref{p2}, we have  
${\abs{A+B}}_{\dot w} \leq {\abs{A}}_w+{\abs{B}}_w\,$. 
It follows that if $A, B \in E_w(H_1, H_2)$, then $A+B \in E_{\dot w}(H_1, H_2)=E_w(H_1, H_2)$ and 
\[\frac{1}{M}{\abs{A+B}}_w \leq {\abs{A+B}}_{\dot w} \leq {\abs{A}}_w+{\abs{B}}_w\, .\]
 \end{proof}

\begin{proposition} \label{complete}
If $\dot w\asymp w$, then $E_w(H_1, H_2)$ is complete with respect to the quasi-norm ${\abs{\cdot}}_w\,$.
\end{proposition}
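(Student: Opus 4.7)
The plan is to mimic the standard proof that $c_0$-type weighted sequence spaces are complete, by first extracting a limit in a weaker topology and then upgrading. The crucial observation is that the $w$-gauge dominates the operator norm: since $\norm{T}=s_1(T)\le w_1|T|_w$ for every $T\in E_w(H_1,H_2)$, any sequence that is Cauchy with respect to $|\cdot|_w$ is automatically Cauchy in operator norm. Combined with the fact that $S_\infty(H_1,H_2)$ is norm-closed in $L(H_1,H_2)$, this produces a natural candidate limit.

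In detail, let $(A_n)_{n\in\N}$ be a Cauchy sequence in $(E_w(H_1,H_2),|\cdot|_w)$. By the observation above, $(A_n)$ is Cauchy in operator norm, so there is $A\in S_\infty(H_1,H_2)$ with $\norm{A_n-A}\to 0$. To see that $A$ lies in $E_w(H_1,H_2)$ and that $A_n\to A$ in the quasi-norm, fix $\varepsilon>0$ and choose $N\in\N$ so that $|A_n-A_m|_w<\varepsilon$ for all $n,m\ge N$; this gives
\[
 s_k(A_n-A_m)\le \varepsilon\, w_k \qquad (\forall k\in\N,\ \forall n,m\ge N).
\]
I would then let $m\to\infty$ for each fixed $k$ and $n\ge N$, using the standard Lipschitz continuity of the singular values in operator norm, namely $|s_k(T)-s_k(S)|\le\norm{T-S}$, which follows from the min-max characterisation (or from inequality \eqref{ceqs}). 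This yields $s_k(A_n-A)\le\varepsilon\, w_k$ for every $k\in\N$ and every $n\ge N$, so that $A_n-A\in E_w(H_1,H_2)$ with $|A_n-A|_w\le\varepsilon$ for $n\ge N$.

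Since Corollary~\ref{lin} guarantees that $E_w(H_1,H_2)$ is a linear space under the standing hypothesis $\dot w\asymp w$, we may write $A=A_n-(A_n-A)\in E_w(H_1,H_2)$, and the previous estimate also gives $|A_n-A|_w\to 0$. The main obstacle in a proof of this kind is usually ensuring that the pointwise candidate limit actually belongs to the space, but here that is handled cleanly: the continuity of $s_k$ under operator-norm perturbation lets us transfer the uniform bound $s_k(A_n-A_m)\le \varepsilon w_k$ through the limit $m\to\infty$ without interacting with the quasi-triangle constant of $|\cdot|_w$, so the (potentially awkward) quasi-norm structure never enters the argument in an essential way.
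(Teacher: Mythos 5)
Your proof is correct and follows essentially the same route as the paper's: extract a candidate limit $A$ in $S_\infty$ via the domination $\norm{T}\le w_1|T|_w$, pass the uniform bound $s_k(A_n-A_m)\le\varepsilon w_k$ through the limit $m\to\infty$, and invoke Corollary~\ref{lin} to conclude $A\in E_w$. The only cosmetic difference is that you make explicit the Lipschitz continuity of $s_k$ in operator norm (which the paper uses implicitly when letting $m\to\infty$).
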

\begin{proof}
Let $(A_n)_{n \in \N}$ be a Cauchy sequence in $E_w(H_1, H_2)\,$. First we note that $(A_n)_{n \in \N}$ is a Cauchy sequence in 
$S_{\infty}(H_1, H_2)$ with respect to 
the operator norm $\norm{\cdot}$, since
$\norm{A_n-A_m}\leq {\abs{A_n-A_m}}_w w_1\,$.
As $S_{\infty}(H_1, H_2)$ is complete there is an $A \in S_{\infty}(H_1, H_2)$ 
such that $A_n \rightarrow A$ as $n\rightarrow \infty$ in the operator norm $\norm{\cdot}.$
We need to prove that $A\in E_w(H_1, H_2)$ and ${\abs{A_n-A}}_w\rightarrow 0$ 
as $n\rightarrow \infty\,.$
Fix $\epsilon \geq 0\,.$ Since $(A_n)_{n \in \N}$ is Cauchy in ${\abs{\cdot}}_w\,$, there exists $N_{\epsilon} \in \N$ such that
\[s_k (A_n-A_m) \leq {\abs{A_n-A_m}}_ww_k \leq \epsilon w_k \quad (\forall n, m \geq N_{\epsilon}, 
\forall k \in \N)\, .\]  
Letting $m \rightarrow \infty$ in the above 
we obtain  
\[s_k(A_n-A)\leq \epsilon w_k \quad (\forall n \geq N_{\epsilon}, \forall k \in \N)\,,\]
and so 
\begin{equation} \label{ep}
{\abs{A_n-A}}_w \leq \epsilon \quad (\forall n \geq N_{\epsilon})\,.
\end{equation}
The above implies that ${\abs{A_n-A}}_w\rightarrow 0$ 
as $n\rightarrow \infty$. It remains to show that $A\in E_w(H_1,H_2)$. In order to see this, fix 
$n\geq N_\epsilon$. 
Inequality (\ref{ep}) now implies that 
$A_n-A$ is an element of $E_w(H_1, H_2)$. Since $A_n$ is also an element of $E_w(H_1, H_2)$ and 
$E_w(H_1, H_2)$ is linear by  Corollary \ref{lin}, we then obtain $A\in E_w(H_1, H_2)$. 
\end{proof}
\begin{proposition} \label{quasideal}
If $\dot w\asymp w$, then $E_w$ is a quasi-Banach operator ideal.
\end{proposition}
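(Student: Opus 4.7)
The plan is to verify the axioms of a quasi-Banach operator ideal in the sense of Pietsch~\cite{Pie80} one by one and to observe that each axiom has already been supplied by a result earlier in this section. The proof will therefore be essentially a synthesis of what has been proved already, rather than involving new work.

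Concretely, I would argue as follows. By Corollary~\ref{lin}, the hypothesis $\dot w\asymp w$ ensures that for every pair of separable Hilbert spaces $H_1, H_2$ the class $E_w(H_1,H_2)$ is a linear subspace of $L(H_1,H_2)$ on which $|\cdot|_w$ is a quasi-norm. Proposition~\ref{complete} upgrades this to a complete quasi-norm, so $(E_w(H_1,H_2), |\cdot|_w)$ is a quasi-Banach space. The two-sided ideal property---namely, that $BAC \in E_w(H_0,H_3)$ with $|BAC|_w \leq \norm{B}\,|A|_w\,\norm{C}$ whenever $B\in L(H_2,H_3)$, $A\in E_w(H_1,H_2)$ and $C\in L(H_0,H_1)$---is exactly the content of Proposition~\ref{ideal}.

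The only remaining point concerns the normalization of the quasi-norm on rank-one operators, which is required by the Pietsch axioms. For a rank-one operator $y\otimes x$ acting by $z\mapsto (z,x)y$, the only nonzero singular value is $s_1 = \norm{x}\norm{y}$, so a short calculation gives $|y\otimes x|_w = \norm{x}\norm{y}/w_1$, which is finite and nonzero whenever $w$ is not identically zero (the case $w \equiv 0$ renders the proposition trivial). If the strict Pietsch normalization $|y\otimes x| = \norm{x}\norm{y}$ is desired, one can pass to the equivalent quasi-norm $w_1 |\cdot|_w$. No step here is delicate: the proposition is, as noted, essentially a summary combining Corollary~\ref{lin}, Proposition~\ref{ideal} and Proposition~\ref{complete}, with only the trivial rank-one normalization calculation added.
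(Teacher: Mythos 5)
Your proof is correct and takes exactly the paper's route: the paper's entire proof reads ``Follows from Proposition~\ref{ideal}, Corollary~\ref{lin} and Proposition~\ref{complete}.'' Your additional remark on the Pietsch rank-one normalization---namely that $|y\otimes x|_w = \norm{x}\norm{y}/w_1$, so that $w_1|\cdot|_w$ is the normalized equivalent quasi-norm---is accurate and makes explicit a minor point the paper leaves unstated (though note that when $w\equiv 0$ the class $E_w=\{0\}$ fails to contain rank-one operators at all, so the degenerate case should really be \emph{excluded} rather than called trivial).
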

\begin{proof}
Follows from Proposition~\ref{ideal}, Corollary~\ref{lin} and Proposition~\ref{complete}. 
\end{proof}

We now turn to studying the rate of decay of the eigenvalue sequence of an operator
in a given compactness class. In order to do this we 
require the following notation. 

\begin{definition}
Let $w\in \mathcal W$. Then we define $\bar w$ as the sequence of successive geometric means of $w$, that is, 
\[{\bar w}_k=(w_1\cdots w_k)^{\frac{1}{k}} \quad (\forall k\in \N)\, .\] 
\end{definition}
\begin{proposition} \label{p3}
Let $A\in E_w(H_1, H_2)$. Then 
\[\lambda(A)\in {\mathcal E}_{\bar w}\quad \text{with}\quad {\abs{\lambda(A)}}_{\bar w}\leq {\abs{A}}_w\, .\]
\end{proposition}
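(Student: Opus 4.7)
The plan is to invoke the multiplicative Weyl inequality (\ref{twyl1}) in combination with the monotonicity of the eigenvalue sequence $(|\lambda_k(A)|)_{k\in\N}$.

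First, I would fix $A\in E_w(H_1,H_2)$ (interpreting this in the case $H_1=H_2$ so that eigenvalues are defined) and note that by the definition of the $w$-gauge we have
\[ s_k(A)\leq |A|_w\, w_k \quad(\forall k\in\N)\,.\]
Multiplying these bounds together for $k=1,\ldots,n$ gives
\[ \prod_{k=1}^{n}s_k(A)\leq |A|_w^{\,n}\prod_{k=1}^{n}w_k\,.\]

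Next, I would combine this with the multiplicative Weyl inequality (\ref{twyl1}). Since the eigenvalues are ordered by decreasing modulus, we have $|\lambda_n(A)|^n\leq \prod_{k=1}^{n}|\lambda_k(A)|$, so chaining the inequalities yields
\[ |\lambda_n(A)|^n \leq \prod_{k=1}^{n}|\lambda_k(A)|\leq \prod_{k=1}^{n}s_k(A)\leq |A|_w^{\,n}\prod_{k=1}^{n}w_k\,.\]
Taking $n$-th roots and recalling the definition ${\bar w}_n=(w_1\cdots w_n)^{1/n}$ we obtain
\[ |\lambda_n(A)|\leq |A|_w\, {\bar w}_n \quad(\forall n\in\N)\,,\]
which by the definition of the $w$-gauge on $\mathcal{E}_{\bar w}$ is precisely the assertion $\lambda(A)\in\mathcal{E}_{\bar w}$ with $|\lambda(A)|_{\bar w}\leq |A|_w$.

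There is essentially no obstacle here: the proof is a one-line consequence of the multiplicative Weyl inequality together with the trivial bound $|\lambda_n(A)|^n\leq \prod_{k=1}^n|\lambda_k(A)|$. The only thing worth noting is that no monotonicity or regularity hypothesis on $w$ (such as $\dot w\asymp w$) is needed, and the case where $A$ has only finitely many non-zero eigenvalues is automatic from the convention $\lambda_k(A)=0$ for $k>N$.
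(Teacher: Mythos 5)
Your proof is correct and is essentially identical to the paper's argument: both apply the multiplicative Weyl inequality together with the monotonicity bound $|\lambda_n(A)|^n\leq\prod_{k=1}^n|\lambda_k(A)|$ and the estimate $s_k(A)\leq|A|_w w_k$, then take $n$-th roots. No meaningful difference.
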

\begin{proof}
Let $A\in E_w(H_1, H_2)\,$. 
By the multiplicative Weyl inequality (\ref{twyl1}) we have, for every $k\in\N$, 
  \[{\abs{\lambda_k(A)}}^k\leq\prod_{l=1}^{k}\abs{\lambda_l(A)} \leq \prod_{l=1}^{k}s_l(A)\leq 
  {\abs{A}}_w  w_1\cdots {\abs{A}}_w w_k\leq \abs{A}_w^k w_1\cdots w_k\, .\]
  Thus 
 \[|\lambda_k(A)|\leq {\abs{A}}_w (w_1 \cdots w_k)^{\frac{1}{k}}=|A|_w\bar{w}_k \quad( \forall k\in \N)\,,\]
 and we obtain
 \[\lambda(A)\in {\mathcal E}_{\bar w}\quad \text{and}\quad {\abs{\lambda(A)}}_{\bar w}\leq {\abs{A}}_w\, ,\]
 as desired.
 
\end{proof}

\section{General resolvent bounds}
\label{comsecond}
The first bound for the norm of the resolvent of a linear operator 
on an infinite-dimensional Hilbert space was derived by Carleman (see \cite{Car21}), who 
obtained a bound for Hilbert-Schmidt operators. 
His result was later generalised to Schatten-von
 Neumann operators (see, for example, \cite{DS63, Sim77}). For more information about generalised Carleman
 type estimates see also \cite{DP94, DP96}.
 
In this section we shall derive an upper bound for the norm of the resolvent 
$R(A; z)$ of $A\in E_w(H_1, H_2)$ in terms of the distance of $z$ to the spectrum of $A$ and 
 the $w$-departure from normality of $A$, a number measuring the non-normality of $A$. 
 As already mentioned, we shall generalise the approach of 
 Henrici in \cite{Hen62} outlined in the introduction to the infinite-dimensional setting.
 The basic idea will be to write $A$ as a sum
 of a normal operator $D$ with $\sigma(D)=\sigma(A)$
and a quasi-nilpotent operator $N$, that is, an operator the spectrum of which consists of the point $0$ only, and to consider $A$ as a perturbation of $D$ by $N$. 

We start with a bound for powers of quasi-nilpotent operators, due to 
Dostani\'{c}.  
\begin{theorem}\label{dos}
There is a constant $C\geq\pi/2$ such that for 
any quasi-nilpotent $A\in S_{\infty}(H)$ and
for every $k \in \N$ we have 
\[\norm{A^{2k}}\leq C^{2k}{(s_1(A) \cdots s_k(A))}^2\,.\]

\end{theorem}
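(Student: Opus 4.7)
The plan is to pass to a finite-rank (hence nilpotent) approximation, diagonalise as far as possible via the Ringrose/Schur triangular form, and then push the combinatorial estimates on powers of strictly upper triangular matrices through a Hilbert-type inequality, which is where the constant $C \geq \pi/2$ should appear.

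First I would reduce to the finite-rank case. Let $A \in S_\infty(H)$ be quasi-nilpotent, and choose a nested sequence of $A$-invariant finite-dimensional subspaces $(H_n)_{n\in\N}$ coming from a Ringrose-type triangulating chain, with orthogonal projections $P_n$ onto $H_n$. Then $A_n := P_n A P_n$ equals $AP_n$ on $H_n$, so each $A_n$ is finite-rank, and since $A$ is quasi-nilpotent each $A_n$ is in fact nilpotent. By the Fan--Courant type inequality one has $s_k(A_n) \leq s_k(A)$ for every $k \in \N$, and by the construction of the chain one checks that $A_n^{2k} \to A^{2k}$ strongly. Hence once the bound is proved with a uniform constant for each $A_n$, passing to the limit (using $\norm{A^{2k}} = \sup \norm{A^{2k} x}$ over unit vectors $x$, which are ultimately in some $H_n$) yields the bound for $A$.

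Second, for a finite-dimensional nilpotent operator I would use the Schur decomposition to obtain an orthonormal basis $(e_j)$ in which $A$ is strictly upper triangular with entries $a_{ij}$ vanishing for $i \geq j$. Expanding
\[
\bigl\langle A^{2k} e_{j_0},\, e_{j_{2k}}\bigr\rangle
 = \sum_{j_0 < j_1 < \cdots < j_{2k}} a_{j_1 j_0}\,a_{j_2 j_1}\cdots a_{j_{2k} j_{2k-1}}\,,
\]
one pairs neighbouring factors and applies the Cauchy--Schwarz inequality iteratively, reducing the estimate of $\norm{A^{2k}}$ to a bound on a sum of products of row/column $\ell^2$-norms of $A$. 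The key analytic tool here, which is where $\pi/2$ should enter, is a Hilbert-type inequality of the form $\sum_{i,j} \tfrac{x_i y_j}{i+j} \leq \pi \norm{x}_{\ell^2}\norm{y}_{\ell^2}$, iterated $k$ times to match the $2k$ factors; this produces a bound of the form $C^{2k}$ rather than a factorial one.

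Third, I would control the resulting $\ell^2$-combinations of entries of $A$ by the singular values via the Weyl-Horn inequalities. Since the Hilbert--Schmidt norm of $A$ truncated to the top $k$ rows (respectively, columns) of the Schur basis is majorised by $\bigl(\sum_{j \leq k} s_j(A)^2\bigr)^{1/2}$, and likewise for geometric means via (\ref{twyl1}), a careful matching of indices to exponents produces the product $(s_1(A)\cdots s_k(A))^2$ on the right-hand side. The main obstacle, and what will require the most delicate book-keeping, is arranging the $2k$-fold Cauchy--Schwarz/Hilbert expansion so that exactly $k$ distinct singular values appear, each with multiplicity two, rather than the $2k$ singular values that would come out of a naive use of the multiplicative Weyl inequality; it is precisely this savings-by-a-factor-of-two that uses the quasi-nilpotency of $A$ (through the strict triangularity of the Schur form).
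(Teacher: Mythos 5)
The paper does not prove this theorem at all; the proof it gives is the single line ``See \cite{Dos01}, Theorem~1'', so you are attempting to reconstruct an argument the authors chose to import wholesale from the literature. Your reconstruction, however, breaks down at the very first step. You assume that a quasi-nilpotent compact $A$ admits a nested sequence of finite-dimensional $A$-invariant subspaces $(H_n)$ with $\bigcup_n H_n$ dense, arising from a ``Ringrose-type triangulating chain'', so that $A_n:=P_nAP_n$ is finite-rank nilpotent, $s_k(A_n)\le s_k(A)$, and $A_n^{2k}\to A^{2k}$ strongly. But the Ringrose nest of invariant subspaces of a compact quasi-nilpotent operator can be continuous and contain no nontrivial finite-dimensional members whatsoever. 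The standard example is precisely the operator Dostani\'{c}'s theorem was designed to handle: the classical Volterra operator $(Vf)(x)=\int_0^x f(t)\,dt$ on $L^2[0,1]$, whose lattice of closed invariant subspaces is the continuous nest $\{f: f=0 \text{ a.e. on } [0,a]\}$, $a\in[0,1]$ (Donoghue's theorem), so that the only finite-dimensional invariant subspace is $\{0\}$. In that case every $P_n$ is the zero projection, each $A_n$ is the zero operator, and $A_n^{2k}\not\to A^{2k}$. Replacing the invariant-subspace truncations by some other finite-rank norm-approximation does not save the argument as written, since such approximants need not be nilpotent, and the domination $s_k(A_n)\le s_k(A)$ would also be lost.

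Even granting the finite-dimensional reduction, your second and third steps leave the combinatorial heart of the matter unresolved: you explicitly describe the step that produces $(s_1(A)\cdots s_k(A))^2$ from the $2k$-fold Cauchy--Schwarz/Hilbert expansion --- rather than the weaker $\prod_{j\le 2k}s_j(A)$ that a naive application of the multiplicative Weyl/Horn inequalities would give --- as ``the main obstacle'' which ``will require the most delicate book-keeping'', and you do not carry that book-keeping out. Since that factor-of-two saving in the index is precisely what makes the resolvent series $F_{\dot{\bar w}}$ in Section~\ref{comsecond} converge (and is the entire point of the theorem), the finite-dimensional core of the argument is not actually established either. As it stands the proposal identifies the right ingredients (a triangular model, a Hilbert-type inequality for the constant $\pi/2$, and Weyl-type singular-value estimates) but both the reduction and the estimate it is supposed to reduce to contain genuine gaps.
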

 \begin{proof}
 See \cite[Theorem~1]{Dos01}.
 \end{proof}
 
Given $w\in \mathcal W$, we  define a function 
$F_w:\R_0^+\to \R_0^+$  by setting 
\begin{equation}
\label{functionc}
F_w(r)=(1+rw_1)\left(1+\sum_{k=1}^{\infty}(w_1\cdots w_k)^2{(Cr)^{2k}}\right)\,,
\end{equation}
where $C$ is the constant from Theorem~\ref{dos}. 
It is not difficult to see that $F_w$ is well-defined, 
real-analytic and strictly monotonically increasing. 
We are now ready to deduce resolvent bounds for quasi-nilpotent operators.
 \begin{proposition} \label{p5}
 Let $w\in \mathcal{W}$ and let $A\in E_w(H)$ be a quasi-nilpotent operator.  
 Then 
 \[\norm{(I-A)^{-1}}\leq F_w({\abs{A}}_w)\,.\]
 \end{proposition}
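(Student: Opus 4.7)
The plan is to represent $(I-A)^{-1}$ by its Neumann series. Since $A$ is quasi-nilpotent, its spectral radius is zero, so $\norm{A^n}^{1/n}\to 0$, which guarantees that $\sum_{n=0}^{\infty}A^n$ converges absolutely in operator norm to $(I-A)^{-1}$. Hence it suffices to control $\sum_{n=0}^{\infty}\norm{A^n}$.

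My next step is to split this sum according to parity and invoke Theorem~\ref{dos} on the even powers:
\[
\sum_{n=0}^{\infty}\norm{A^n}
= \sum_{k=0}^{\infty}\norm{A^{2k}} + \sum_{k=0}^{\infty}\norm{A^{2k+1}}.
\]
For even $n=2k$ with $k\geq 1$, Theorem~\ref{dos} yields $\norm{A^{2k}}\leq C^{2k}(s_1(A)\cdots s_k(A))^2$, while $\norm{A^0}=1$. For odd $n=2k+1$, I would use the submultiplicativity of the operator norm together with $\norm{A}=s_1(A)$ to write $\norm{A^{2k+1}}\leq s_1(A)\norm{A^{2k}}$. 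Factoring out, this gives
\[
\sum_{n=0}^{\infty}\norm{A^n}\leq (1+s_1(A))\left(1+\sum_{k=1}^{\infty}C^{2k}(s_1(A)\cdots s_k(A))^2\right).
\]

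Finally, I would apply the defining inequality of $E_w(H)$, namely $s_k(A)\leq \abs{A}_w w_k$ for every $k$, to each of the $s_k(A)$ appearing above. This produces
\[
\sum_{n=0}^{\infty}\norm{A^n}\leq (1+\abs{A}_w w_1)\left(1+\sum_{k=1}^{\infty}(w_1\cdots w_k)^2(C\abs{A}_w)^{2k}\right)=F_w(\abs{A}_w),
\]
which is exactly the claimed bound.

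The argument is essentially bookkeeping once Dostani\'{c}'s theorem is invoked, so no serious obstacle is expected; the only delicate point is verifying that $F_w(\abs{A}_w)$ is genuinely finite, but this follows from the fact that, because $w_k\to 0$, the geometric means $(w_1\cdots w_k)^{1/k}$ tend to $0$ (by a Ces\`aro argument applied to $\log w_k$), so the power series defining $F_w$ has infinite radius of convergence. The structural choice worth highlighting is pairing each odd power with the preceding even one via $\norm{A^{2k+1}}\leq\norm{A}\norm{A^{2k}}$; this produces precisely the prefactor $(1+rw_1)$ in the definition of $F_w$ and explains the asymmetric shape of that function.
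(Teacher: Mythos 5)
Your proof is correct and follows essentially the same route as the paper: Neumann series, parity split pairing $\norm{A^{2k+1}}\leq\norm{A}\norm{A^{2k}}$ to factor out $(1+s_1(A))$, Dostani\'{c}'s bound on the even powers, and finally the $w$-gauge inequality $s_k(A)\leq\abs{A}_w w_k$. The added remark on the finiteness of $F_w$ via the Ces\`aro argument is a sound supplement (the paper disposes of this separately when introducing $F_w$), but otherwise the two arguments coincide line for line.
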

 \begin{proof}
 Suppose $A\in E_w(H)$ is a quasi-nilpotent operator. Using a Neumann series and 
 Theorem~\ref{dos}, we have 
 \[\norm{(I-A)^{-1}}\leq \sum_{k=0}^{\infty} \norm{A^{k}}= \sum_{k=0}^{\infty} (\norm{A^{2k}}+\norm{A^{2k+1}})\,,\]
 \[\leq  (1+\norm{A})\left(1+\sum_{k=1}^{\infty}\norm{A^{2k}}\right)\,,\]
 \[\leq  (1+s_1(A))\left(1+\sum_{k=1}^{\infty}C^{2k}(s_1(A) \cdots s_k(A))^2\right)\,.\]
 Therefore we obtain 
 \[\norm{(I-A)^{-1}}\leq (1+{\abs{A}}_ww_1)\left(1+\sum_{k=1}^{\infty}(w_1\cdots w_k)^2(C {\abs{A}}_w)^{2k} \right)\,,\] 
 as required.
 \end{proof}
 An immediate consequence of the previous proposition is the following
 estimate for the growth of the resolvent of a quasi-nilpotent operator $A\in E_w(H)$. 
\begin{corollary}
Let $w\in \mathcal{W}$ and let $A\in E_w(H)$ be quasi-nilpotent. Then for  any $z\not=0$ 
\[\norm{R(A; z)}\leq \abs{z}^{-1}F_w(\abs{z}^{-1}{\abs{A}}_w)\,.\]
\end{corollary}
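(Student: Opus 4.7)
The plan is to deduce this corollary from Proposition~\ref{p5} by a simple rescaling argument. The point is that $F_w$ was designed to control $\norm{(I-A)^{-1}}$, and to get a bound on $(zI-A)^{-1}$ one just factors out $z$ and applies the proposition to the rescaled operator $z^{-1}A$.

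First I would write, for $z\neq 0$,
\[
R(A;z) = (zI-A)^{-1} = z^{-1}\left(I-z^{-1}A\right)^{-1},
\]
so that
\[
\norm{R(A;z)} \leq \abs{z}^{-1}\norm{(I-z^{-1}A)^{-1}}.
\]
Next I would observe that $z^{-1}A$ is still quasi-nilpotent: since $\sigma(A)=\{0\}$, the spectral mapping theorem (or, more elementarily, the fact that scalar multiples of quasi-nilpotent operators are quasi-nilpotent) gives $\sigma(z^{-1}A)=\{0\}$. Moreover, by Lemma~\ref{al}, $z^{-1}A\in E_w(H)$ with $\abs{z^{-1}A}_w=\abs{z}^{-1}\abs{A}_w$.

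Now Proposition~\ref{p5} applies to the quasi-nilpotent operator $z^{-1}A\in E_w(H)$ and yields
\[
\norm{(I-z^{-1}A)^{-1}} \leq F_w(\abs{z^{-1}A}_w) = F_w(\abs{z}^{-1}\abs{A}_w).
\]
Combining this with the factorisation above gives the claimed bound.

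There is no real obstacle here; the result is just a one-line rescaling of the previous proposition, and all the analytic work has already been absorbed into the definition of $F_w$ and into Theorem~\ref{dos}. The only small point worth flagging is checking that quasi-nilpotency is preserved under scalar multiplication, but that is immediate.
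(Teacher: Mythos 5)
Your proof is correct and is exactly the intended argument: the paper presents this as an immediate consequence of Proposition~\ref{p5}, and the rescaling $R(A;z)=z^{-1}(I-z^{-1}A)^{-1}$ together with Lemma~\ref{al} and the stability of quasi-nilpotency under scalar multiplication is precisely what is meant. Nothing is missing.
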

By means of the following theorem, an upper bound for the norm of the resolvent 
$R(A; z)$ of $A\in E_w(H_1, H_2)$ can be obtained.
\begin{theorem}
\label{schur theorem}
Let $A \in S_{\infty}(H)$. Then $A$ can be written as a sum 
\[A=D+N\,,\] 
such that 
\begin{itemize}
\item[(i)] $D \in S_{\infty}(H), N\in S_{\infty}(H)$; 
\item[(ii)] $D$ is normal and $\lambda(D)=\lambda(A)$;
\item[(iii)] $N$ and $(zI-D)^{-1}N$ are quasi-nilpotent for every $z\in\rho(D)=\rho(A)$.
\end{itemize}
\end{theorem}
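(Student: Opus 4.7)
The plan is to construct an orthonormal Schur-type basis for $H$ adapted to $A$, take $D$ to be the diagonal part of $A$ in that basis, and set $N:=A-D$. The hope is that this decomposition will make (i) and (ii) nearly automatic, while (iii) will follow from a Ringrose-type triangularisation result for compact operators.

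First I would produce a maximal nest of finite-dimensional subspaces of $H$ invariant under $A$. Using the Riesz--Schauder theory for a compact operator, order the non-zero eigenvalues of $A$ with algebraic multiplicities as $\lambda_1(A),\lambda_2(A),\ldots$ (padding with zeros if there are only finitely many) and decompose the closed linear span of the corresponding generalised eigenspaces into a direct sum of one-dimensional invariant steps. Applying Gram--Schmidt, and then extending to the complementary invariant subspace where $A$ acts as a quasi-nilpotent compact operator (again invoking the fact that a compact quasi-nilpotent operator on a separable Hilbert space admits a maximal triangularising nest, cf.\ Ringrose), I obtain an orthonormal basis $(e_k)_{k\in\N}$ such that $M_k:=\operatorname{span}(e_1,\ldots,e_k)$ is $A$-invariant and $Ae_k-\lambda_k(A)e_k\in M_{k-1}$ for every $k$.

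Next I would define $D\in L(H)$ by $De_k=\lambda_k(A)e_k$ and set $N:=A-D$. Since $\lambda_k(A)\to 0$, the diagonal operator $D$ is compact and clearly normal, and by construction its eigenvalue sequence (ordered by decreasing modulus) coincides with $\lambda(A)$, giving (i) and (ii). The matrix of $N$ in the basis $(e_k)$ is then strictly upper triangular, and $\rho(D)=\rho(A)$ follows because the spectra of compact operators are determined by their eigenvalues (and $0$ in the infinite-dimensional case).

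The main obstacle is (iii). Here I would appeal to Ringrose's theorem: if $T\in S_\infty(H)$ is triangular with respect to a maximal nest, then $\sigma(T)\setminus\{0\}$ coincides with the set of diagonal coefficients $\langle Te_k,e_k\rangle$. Since $N$ has zero diagonal with respect to the nest $(M_k)$, this forces $\sigma(N)=\{0\}$, so $N$ is quasi-nilpotent. For the second assertion of (iii), note that for $z\in\rho(D)$ the resolvent $(zI-D)^{-1}$ is diagonal in $(e_k)$, so the product $(zI-D)^{-1}N$ remains triangular with zero diagonal with respect to the same nest; since it is compact (being a bounded operator times a compact one), a second application of Ringrose's theorem shows it is quasi-nilpotent. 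The delicate point throughout is to justify that the nest obtained in the first step is genuinely maximal and triangularising on the full space $H$, especially in case $A$ has only finitely many non-zero eigenvalues, in which case the complementary quasi-nilpotent compact piece must itself be triangularised by a further nest.
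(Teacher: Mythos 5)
Your overall strategy (Schur triangularisation plus Ringrose's spectral theorem) is the right one and is essentially the route taken in the cited reference [Ban04]. However, the first step contains a genuine gap, not merely a ``delicate point'': in general there is \emph{no} orthonormal basis $(e_k)_{k\in\N}$ of $H$ with $M_k=\operatorname{span}(e_1,\dots,e_k)$ invariant under $A$ and $Ae_k-\lambda_k(A)e_k\in M_{k-1}$ for every $k$. The obstruction lies in the ``complementary'' piece. Let $M$ be the closed span of the generalised eigenspaces of $A$ for its nonzero eigenvalues. Then $M$ is $A$-invariant, but $M^\perp$ is only $A^*$-invariant, so $A$ does not simply ``act as a quasi-nilpotent compact operator'' on $M^\perp$; only the compression $A_{22}=P_{M^\perp}A|_{M^\perp}$ does, and while Ringrose does give $A_{22}$ a maximal triangularising nest, that nest may be \emph{continuous}, with no atoms at all, hence contributing no orthonormal vectors. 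A concrete counterexample to your claimed basis is the Volterra operator $V$ on $L^2[0,1]$: here $\lambda_k(V)\equiv 0$, so your basis would force $Ve_1\in M_0=\{0\}$, contradicting the injectivity of $V$. Consequently the diagonal operator $D$ you define by $De_k=\lambda_k(A)e_k$ need not exist, and the subsequent appeals to Ringrose via the single nest $(M_k)_{k\in\N}$ do not get started.

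The repair --- and this is effectively what the cited source does --- is to forgo a global Schur basis and work with the two-block decomposition $H=M\oplus M^\perp$. On $M$ one \emph{can} build a Schur basis $(\phi_j)$ whose $j$-th diagonal entry is $\lambda_j(A)$, since each nonzero eigenvalue contributes a finite chain of generalised eigenvectors and these account for all nonzero eigenvalues of $A$. Define $D$ to be the diagonal part of $A_{11}=P_MA|_M$ in this basis, extended by zero on $M^\perp$, and set $N=A-D$. Both $A$ and $N$ are then block upper triangular relative to $H=M\oplus M^\perp$, with $N_{11}=A_{11}-D_{11}$ strictly upper triangular in $(\phi_j)$ and $N_{22}=A_{22}$. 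Parts (i) and (ii) are now immediate. For (iii), use the elementary inclusion $\sigma(N)\subseteq\sigma(N_{11})\cup\sigma(N_{22})$, valid for any bounded block upper triangular operator: $\sigma(N_{11})=\{0\}$ by Ringrose applied to the \emph{atomic} nest on $M$, while $\sigma(N_{22})=\{0\}$ since the compression $A_{22}$ carries none of the nonzero eigenvalues of $A$. The same computation handles $(zI-D)^{-1}N$: since $(zI-D)^{-1}$ is block diagonal (and diagonal in $(\phi_j)$ on $M$), the product is block upper triangular with quasi-nilpotent diagonal blocks $(zI-D_{11})^{-1}N_{11}$ and $z^{-1}A_{22}$. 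Once your global basis is replaced by this two-block argument the rest of your reasoning goes through.
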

\begin{proof}
See \cite[Theorem~3.2]{Ban04}. 
\end{proof}

The theorem above motivates the following definition.  
\begin{definition}
Let $A \in S_{\infty}(H)$. A decomposition
\[A=D+N\] with $D$ and $N$ satisfying the properties (i--iii) of the previous theorem is called a \textit{Schur decomposition of $A$}.
 We call the operators $D$ and $N$ the \textit{normal} and the \textit{quasi-nilpotent part of the Schur decomposition of $A$}, respectively.  
\end{definition}

\begin{remark}
\label{rem:unique}
  The decomposition is not unique, as can be seen from the following example taken from  
  \cite[Remark 3.5 (i)]{Ban04}. Consider
 \begin{align*}
 A:= \left( \begin{array}{ccc}
2 & 2 & 2 \\
0 & 0 & 2 \\
0 & 0 & 0 \end{array} \right) 
& =\underbrace{\left( \begin{array}{ccc}
2 & 0 & 0 \\
0 & 0 & 0 \\
0 & 0 & 0 \end{array} \right)}_{=:D_1} +\underbrace{\left( \begin{array}{ccc}
0 & 2 & 2 \\
0 & 0 & 2 \\
0 & 0 & 0 \end{array} \right)}_{=:N_1} \\
& =\underbrace{\left( \begin{array}{ccc}
1 & 1 & 0 \\
1 & 1 & 0 \\
0 & 0 & 0 \end{array} \right)}_{=:D_2} +\underbrace{\left( \begin{array}{ccc}
1 & 1 & 2 \\
-1 & -1 & 2 \\
0 & 0 & 0 \end{array} \right)}_{=:N_2}\,.
\end{align*}

It is easy to see that $D_1$ and $D_2$ are normal and that $N_1$ and $N_2$ are 
nilpotent. Moreover $\sigma(A)=\sigma(D_1)=\sigma(D_2)=\{2,0\}$. Furthermore, 
both $(zI-D_1)^{-1}N_1$ and  $(zI-D_2)^{-1}N_2$ are nilpotent for any $z\in \rho(A)$. Thus $A$ has two 
different Schur decompositions. 
  
Note that the normal parts are obviously unitarily equivalent. However, 
the nilpotent parts are not. In order to see this observe that  
\[\norm{N_1}_4^4=112\not =80=\norm{N_2}_4^4\,,\] where
$\norm{\cdot}_4$ is the norm of the Schatten class $S_4(\C^3)$. 

\end{remark}
In the following proposition we determine an upper bound for the singular values of the normal part
and the quasi-nilpotent part of a Schur decomposition of an operator in a given compactness class. 
 \begin{proposition}\label{p4}
 Let $A\in E_w(H)$. 
 If $A=D+N$ is a Schur decomposition of $A$ with normal part $D$ and quasi-nilpotent part $N$, then
 \begin{itemize} 
 \item[(i)] $D \in E_{\bar{w}}(H)$ with ${\abs{D}}_{\bar{w}}\leq {\abs{A}}_w\,$, where $\bar{w}_k=(w_1 \cdots w_k)^{\frac{1}{k}}$.
 \item[(ii)] $N \in E_{\dot{\bar{w}}}(H)$ with ${\abs{N}}_{\dot{\bar{w}}}\leq 2{\abs{A}}_w\,$, 
 where $\dot{\bar{w}}=(w_1, w_1, (w_1w_2)^{\frac{1}{2}}, (w_1w_2)^{\frac{1}{2}}, \ldots)\,$.
 \end{itemize}
 \end{proposition}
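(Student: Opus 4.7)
My plan is to handle the two parts in sequence, exploiting the fact that the normal part $D$ has its singular values completely determined by the eigenvalues of $A$, and then treating $N=A-D$ as a difference in the weaker class $E_{\bar w}$.

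For part (i), the key observation is that $D$ is a normal compact operator, so its singular values are exactly the absolute values of its eigenvalues, arranged in decreasing order. Since $\lambda(D)=\lambda(A)$ by property (ii) of the Schur decomposition, we obtain $s_k(D)=|\lambda_k(A)|$ for every $k\in\N$. Proposition~\ref{p3} then gives $|\lambda_k(A)|\leq |A|_w\bar w_k$ for every $k\in\N$, which is precisely the statement $D\in E_{\bar w}(H)$ with $|D|_{\bar w}\leq |A|_w$.

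For part (ii), the strategy is to upgrade $A$ to the larger class $E_{\bar w}(H)$ and then apply the additivity result from Proposition~\ref{p2}. Since $w$ is monotonically decreasing, the geometric mean satisfies $\bar w_k=(w_1\cdots w_k)^{1/k}\geq w_k$ for every $k\in\N$, so $w\preceq \bar w$ with constant $M=1$. By Proposition~\ref{p0} we then get $A\in E_{\bar w}(H)$ with $|A|_{\bar w}\leq |A|_w$. Combining this with part (i), both $A$ and $D$ lie in $E_{\bar w}(H)$, and since $N=A-D$, Lemma~\ref{al} and Proposition~\ref{p2}(i) yield
\[
|N|_{\dot{\bar w}}=|A+(-D)|_{\dot{\bar w}}\leq |A|_{\bar w}+|D|_{\bar w}\leq |A|_w+|A|_w=2|A|_w,
\]
giving the desired conclusion.

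I do not anticipate any real obstacles: the proof essentially just assembles Proposition~\ref{p3}, Proposition~\ref{p0}, and Proposition~\ref{p2} around the standard fact that a normal compact operator has singular values equal to the moduli of its eigenvalues. The only subtle point worth pausing on is the monotonicity inequality $\bar w_k\geq w_k$, which requires the (already assumed) monotonicity of $w$; without it, the bootstrap of $A$ from $E_w$ to $E_{\bar w}$ would fail and part (ii) could not be reduced to the additivity proposition in the clean way described above.
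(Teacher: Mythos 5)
Your proof is correct and follows essentially the same route as the paper: part (i) via normality of $D$, $\lambda(D)=\lambda(A)$, and Proposition~\ref{p3}; part (ii) via noting $w\preceq\bar w$ (from monotonicity of $w$), applying Proposition~\ref{p0} to get $A\in E_{\bar w}(H)$, and then Proposition~\ref{p2}(i) to bound $|N|_{\dot{\bar w}}=|A-D|_{\dot{\bar w}}$. The only difference is cosmetic: you spell out why $\bar w_k\geq w_k$, a step the paper leaves implicit.
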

 \begin{proof}
Let $A\in E_w(H)$. Since $D$ is normal, 
its singular values coincide with the moduli of its eigenvalues, 
which also coincide with the moduli of the eigenvalues of $A$. 
Using Proposition \ref{p3} and the fact that $D$ is normal we obtain 
 \[s_k(D)\leq {\abs{A}}_w \bar {w}_k\,,\]
 so $D\in E_{\bar{w}}(H)$ and 
 ${\abs{D}}_{\bar{w}}\leq {\abs{A}}_w$, as required.
 
 For the second part, observe that since $w \preceq \bar w$, we have ${\abs{A}}_{\bar w}\leq {\abs{A}}_w$ via Proposition \ref{p0}. 
Then we also have $A \in E_{\bar w}(H)$ by Proposition \ref{p1}. 
Thus, using Proposition \ref{p2}, we have  
\[N= A-D \in E_{\dot{\bar{w}}}\, ,\]
  \[{\abs{A-D}}_{\dot{\bar{w}}}\leq{\abs{A}}_{\bar w}+{\abs{D}}_{\bar w}\] 
  and so, using assertion (i), we obtain 
 \[{\abs{N}}_{\dot{\bar w}}\leq {\abs{A}}_w+{\abs{A}}_w= 2{\abs{A}}_w\,,\]
as desired. 
 \end{proof} 
  
 We now define the analogue of Henrici's departure from normality for operators in a given compactness class.
 \begin{definition}
 Let $w\in \mathcal W$ and $A\in E_w(H)$. 
 Then 
 \[\nu_w(A)=\inf \all{\abs{N}_{\dot{\bar w}}}{\text{$N$ is the quasi-nilpotent part of a Schur decomposition of 
 $A$}} \]
is called the \textit{$w$-departure from normality of $A$}. 
 \end{definition}

 \begin{remark} Note that by the previous proposition, the $w$-departure from normality of an operator in $E_w(H)$ is always finite. 
 \end{remark}
 
 The term `departure from normality' is justified in view of the following proposition.
 \begin{proposition}
 Let $A\in E_w(H)$. Then 
 \[A \, \text{is normal} \iff \nu_w(A)=0\,.\]
 \end{proposition}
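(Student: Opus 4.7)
The plan is to prove the two implications separately, treating the forward direction as an almost immediate consequence of the definitions and the backward direction as a closedness argument for normal operators.

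For the forward direction, suppose $A$ is normal. I would consider the trivial decomposition $A = A + 0$, setting $D = A$ and $N = 0$. I need to verify this is a Schur decomposition in the sense defined earlier: condition (i) holds because $A$ and $0$ are compact, condition (ii) holds trivially since $D = A$ is normal by hypothesis and has the same eigenvalue sequence as $A$, and condition (iii) is trivial because $N = 0$ is quasi-nilpotent and $(zI - A)^{-1} \cdot 0 = 0$ is quasi-nilpotent for every $z \in \rho(A)$. Since $|0|_{\dot{\bar w}} = 0$, the infimum defining $\nu_w(A)$ is bounded above by $0$, hence $\nu_w(A) = 0$.

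For the backward direction, suppose $\nu_w(A) = 0$. By definition of infimum, there exists a sequence of Schur decompositions $A = D_n + N_n$ with $|N_n|_{\dot{\bar w}} \to 0$ as $n \to \infty$. The key observation is that the $w$-gauge controls the operator norm: indeed,
\[
\|N_n\| = s_1(N_n) \leq |N_n|_{\dot{\bar w}} \cdot (\dot{\bar w})_1 = |N_n|_{\dot{\bar w}} \cdot w_1,
\]
so $\|N_n\| \to 0$. Consequently $D_n = A - N_n \to A$ in operator norm.

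It remains to invoke closedness of the set of normal operators in the operator norm topology: since $D_n \to A$ implies $D_n^* \to A^*$, one has $D_n^* D_n \to A^* A$ and $D_n D_n^* \to A A^*$; using that each $D_n$ is normal, i.e.\ $D_n^* D_n = D_n D_n^*$, we conclude $A^* A = A A^*$, hence $A$ is normal. I do not anticipate any real obstacle here; the only mildly delicate point is making sure the trivial choice $N = 0$ genuinely qualifies as the quasi-nilpotent part of a Schur decomposition, which is settled by the observation that the zero operator has spectrum $\{0\}$ and trivially satisfies condition (iii).
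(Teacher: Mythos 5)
Your proof is correct and follows essentially the same route as the paper: the forward direction via the trivial decomposition $D=A$, $N=0$, and the backward direction by extracting a sequence of Schur decompositions with $\norm{N_n}\leq \dot{\bar w}_1\abs{N_n}_{\dot{\bar w}}\to 0$ and invoking closedness of the normal operators in operator norm. The only difference is that you spell out details the paper compresses (verifying the trivial decomposition is a Schur decomposition, and the continuity argument for $A^*A=AA^*$), which is fine.
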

 \begin{proof}
 The forward implication is trivial. For the backwards implication, let 
 $\nu_w(A)=0$. Then there exists a sequence of Schur decompositions with quasi-nilpotent parts $N_n$
 such that ${\abs{N_n}}_{\dot{\bar w}}\rightarrow 0$ as $n\rightarrow \infty$. But
 \[\norm{A-D_n}=\norm{N_n}=s_1(N_n)\leq {\dot{\bar w}}_1 {\abs{N_n}}_{\dot{\bar w}}\,,\]
 where $D_n$ are the corresponding normal parts, so $\lim_{n\to\infty}\norm{A-D_n}=0$.
  Hence $A$ is a limit of normal 
 operators which converge in operator norm. Thus $A$ is normal. 
 \end{proof}
 Since the departure from normality is difficult to calculate for a given $A\in E_w(H)$,
 we now give a simple upper bound. 
 \begin{proposition}
 \label{lw}
 Let $A\in E_w(H)$. Then 
 \[\nu_w(A)\leq 2\abs{A}_w\,.\]
 \end{proposition}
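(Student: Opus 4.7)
The plan is to obtain the bound as an almost immediate corollary of Proposition~\ref{p4}(ii), by applying the definition of $\nu_w(A)$ to any single Schur decomposition.

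First, I would invoke Theorem~\ref{schur theorem} to guarantee that at least one Schur decomposition $A = D + N$ exists, so that the set over which the infimum in the definition of $\nu_w(A)$ is taken is non-empty. Since $A \in E_w(H)$, Proposition~\ref{p4}(ii) applies to this decomposition and yields
\[
|N|_{\dot{\bar w}} \leq 2|A|_w.
\]
Taking the infimum over all Schur decompositions of $A$ then gives
\[
\nu_w(A) = \inf |N|_{\dot{\bar w}} \leq 2|A|_w,
\]
which is the claimed estimate.

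There is essentially no obstacle here, since all of the work has already been done in Proposition~\ref{p4}(ii); that result establishes the quantitative bound for a single Schur decomposition, and the passage to the infimum preserves the inequality trivially. The only thing worth double-checking is that Theorem~\ref{schur theorem} applies in our setting, which it does because $E_w(H) \subseteq S_\infty(H)$ whenever $w \in \mathcal{W}$ (the weight sequence tends to zero, so $w$-compact operators are in particular compact).
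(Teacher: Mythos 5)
Your proof is correct and is essentially identical to the paper's, which simply cites Proposition~\ref{p4}(ii); you merely spell out the two background facts (existence of a Schur decomposition via Theorem~\ref{schur theorem}, and that the infimum of a set containing $|N|_{\dot{\bar w}}$ is bounded by that value) that the paper leaves implicit.
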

 \begin{proof}
Follows from Proposition~\ref{p4} (ii). 
 \end{proof}
 We are now able to obtain growth estimates for the resolvents of operators in a given compactness class. 
 Before doing so we recall the bound for the resolvent of a normal operator.
 If $D$ is a normal operator on a separable Hilbert space, then 
 \begin{equation}\label{equa1}
 \norm{R(D; z)}=\frac{1}{d(z, \sigma(D))}\quad (\forall z\in\rho(D))\,.
 \end{equation}
 The following is the main result of this section. 
 \begin{theorem}\label{t1}
Let $A\in E_w(H)$. Then 
\begin{equation}
\label{et1}
\norm{R(A; z)}\leq \frac{1}{d(z, \sigma(A))}F_{\dot{\bar w}}\left(\frac{\nu_w(A)}{d(z, \sigma(A))}\right)
\quad (\forall z\in \rho(A))\,.
\end{equation}
\end{theorem}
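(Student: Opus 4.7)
The plan is to mimic the finite-dimensional Henrici-style argument: write $A$ via a Schur decomposition $A = D+N$ and factor the shifted resolvent through the normal part, reducing the problem to bounds for $\|(I-T)^{-1}\|$ with $T$ a quasi-nilpotent operator in a compactness class, which is exactly Proposition~\ref{p5}. Concretely, for $z\in\rho(A)=\rho(D)$ (using Theorem~\ref{schur theorem}(ii)), I would write
\[
zI - A = (zI - D) - N = (zI - D)\bigl(I - (zI-D)^{-1}N\bigr),
\]
so
\[
R(A;z) = \bigl(I - (zI-D)^{-1}N\bigr)^{-1}(zI-D)^{-1}.
\]

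The next step is to analyse the operator $T_z := (zI-D)^{-1}N$. By Theorem~\ref{schur theorem}(iii), $T_z$ is quasi-nilpotent. By Proposition~\ref{p4}(ii) we have $N\in E_{\dot{\bar w}}(H)$, and since $D$ is normal, the standard resolvent identity \eqref{equa1} gives $\|(zI-D)^{-1}\| = 1/d(z,\sigma(D)) = 1/d(z,\sigma(A))$. Applying the ideal-type estimate of Proposition~\ref{ideal} with $B = (zI-D)^{-1}$, $A$ replaced by $N$, and $C = I$, I obtain $T_z\in E_{\dot{\bar w}}(H)$ with
\[
\abs{T_z}_{\dot{\bar w}} \leq \frac{\abs{N}_{\dot{\bar w}}}{d(z,\sigma(A))}.
\]

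Now Proposition~\ref{p5}, applied to the quasi-nilpotent operator $T_z$ in the compactness class $E_{\dot{\bar w}}(H)$, yields
\[
\norm{(I - T_z)^{-1}} \leq F_{\dot{\bar w}}(\abs{T_z}_{\dot{\bar w}}) \leq F_{\dot{\bar w}}\!\left(\frac{\abs{N}_{\dot{\bar w}}}{d(z,\sigma(A))}\right),
\]
where the second inequality uses that $F_{\dot{\bar w}}$ is monotonically increasing. Combining with the factorisation and the resolvent estimate for $D$,
\[
\norm{R(A;z)} \leq \frac{1}{d(z,\sigma(A))}\,F_{\dot{\bar w}}\!\left(\frac{\abs{N}_{\dot{\bar w}}}{d(z,\sigma(A))}\right).
\]

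The final step is to take the infimum over all Schur decompositions of $A$. Since $F_{\dot{\bar w}}$ is real-analytic, hence continuous and monotone, passing to the infimum on the right-hand side replaces $\abs{N}_{\dot{\bar w}}$ by $\nu_w(A)$ and gives \eqref{et1}. The only slightly delicate point is verifying that the quasi-nilpotent operator produced by multiplication with $(zI-D)^{-1}$ still has finite $\dot{\bar w}$-gauge with the correct norm control; this is where the operator-ideal property in Proposition~\ref{ideal} is essential, and it is really the only non-automatic ingredient beyond assembling the pieces already developed in the paper.
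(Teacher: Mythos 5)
Your proposal is correct and follows essentially the same Henrici-style route as the paper's own proof: Schur decomposition via Theorem~\ref{schur theorem}, the factorisation $zI-A=(zI-D)(I-(zI-D)^{-1}N)$, the gauge bound $\abs{(zI-D)^{-1}N}_{\dot{\bar w}}\leq \abs{N}_{\dot{\bar w}}/d(z,\sigma(A))$ (the paper derives this directly from the singular-value inequality~(\ref{peqo}) and~(\ref{equa1}), whereas you invoke Proposition~\ref{ideal}, which encapsulates the same estimate), Proposition~\ref{p5} applied to the quasi-nilpotent $T_z$, and finally the infimum over Schur decompositions using the monotonicity of $F_{\dot{\bar w}}$.
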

 \begin{proof}
 Fix $z\in \rho(A)$. By Proposition \ref{p4}, the operator $A$ has a Schur decomposition 
 with normal part $D$ and quasi-nilpotent part $N$. 
 Thus, we know that $\sigma(A)=\sigma(D)$, that $(zI-D)^{-1}$ exists 
 and that $(zI-D)^{-1}N$ is quasi-nilpotent. Furthermore
\begin{multline*}
 s_k((zI-D)^{-1}N)
 \leq \norm{(zI-D)^{-1}}s_k(N)\\
=\frac{s_k(N)}{d(z, \sigma(D))}
 \leq \frac{{\abs{N}}_{\dot{\bar w}} {\dot{\bar w}}_k}{d(z, \sigma(D))}
 =\frac{{\abs{N}}_{\dot{\bar w}} {\dot{\bar 
 w}}_k}{d(z, \sigma(A))}\,,
 \end{multline*} 
 using (\ref{equa1}) as well as (\ref{peqo}) and Proposition \ref{p4}. 
 Now $(I-(zI-D)^{-1}N)$ is invertible in $L(H)$ and, using Proposition \ref{p5}, it follows that 
 \[
 \norm{(I-(zI-D)^{-1}N)^{-1}}\leq F_{\dot{\bar w}}\left(\frac{{\abs{N}}_{\dot{\bar w}}}{d(z, \sigma(A))}\right)\,. 
 \]
 Since 
 \[(zI-A)=(zI-D)(I-(zI-D)^{-1}N)\,,\] we can conclude that $(zI-A)$ is invertible in $L(H)$ and 
  \begin{align*}
\norm{R(A; z)}
 &\leq \norm{R(D; z)}\norm{(I-(zI-D)^{-1}N)^{-1}}\\
 &\leq \frac{1}{d(z, \sigma(A))}F_{\dot{\bar w}}\left(\frac{{\abs{N}}_{\dot{\bar w}}}{d(z, \sigma(A))}\right)\,.
 \end{align*}
 Taking the infimum over all Schur decompositions the theorem follows. 
 \end{proof}
 \begin{remark}
 \label{rem:upper}\
 \begin{itemize} 
  \item[(i)] Another look at the above proof shows that the bound (\ref{et1}) also holds if we replace 
  $\nu_w(A)$ by a larger quantity, say by the upper bound given in Proposition~\ref{lw}. 
 \item[(ii)] The bound (\ref{et1}) is optimal for normal $A$, as it reduces to the
 sharp bound (\ref{equa1}). 
 \end{itemize}
 \end{remark}

\section{Resolvent bounds for particular classes}
\label{sec:particular}
As we saw in the last section, the growth of the resolvent of an operator
belonging to a given compactness class $E_w$ in the vicinity
of a spectral point is, by Theorem~\ref{t1}, 
controlled by the behaviour of the function 
$F_{\dot{\bar{w}}}$ at infinity. In this section we shall 
study the asymptotics of this function for particular compactness
classes, namely the Schatten-Lorentz ideals, given by $w_k=k^{-1/p}$
with $p\in(0,\infty)$ and the exponential classes, given by 
$w_k=\exp(-ak^\alpha)$ with $a\in (0,\infty)$ and $\alpha\in
(0,\infty)$. 

Before starting with the Schatten-Lorentz ideals we briefly recall
Stirling's approximation for the factorial in the form
\[ \sqrt{2\pi k} \left ( \frac{k}{\e} \right )^k \leq k! \leq
\sqrt{\e^2k}  \left ( \frac{k}{\e} \right )^k \quad (\forall k\in\N)\,.\]

\begin{lemma}
\label{lem:SLwdb}
Let $p\in(0,\infty)$, and let $w_k=k^{-1/p}$ for $k\in \N$. Then the
following inequalities hold: 
\begin{equation}
\label{lem:SLwdbe1}
\exp\left (-\frac{1}{p\sqrt{k}} \right ) \frac{\e^{1/p}}{k^{1/p}} 
\leq \bar{w}_k \leq  \frac{\e^{1/p}}{k^{1/p}}\quad (\forall k\in \N)\,,
\end{equation} 
\begin{equation}
\label{lem:SLwdbe2}
\exp\left (-\frac{3}{p\sqrt{k}} \right ) \frac{(2\e)^{1/p}}{k^{1/p}} 
\leq \dot{\bar{w}}_k \leq  
\frac{(2\e)^{1/p}}{k^{1/p}}\quad (\forall k\in \N)\,,
\end{equation} 
\begin{equation}
\label{lem:SLwdbe3}
\exp\left (-\frac{6}{p}\sqrt{k} \right )
\frac{(2\e)^{1/p}}{(k!)^{1/p}}
\leq  
\prod_{n=1}^k \dot{\bar{w}}_n \leq \frac{(2\e)^{1/p}}{(k!)^{1/p}}
\quad (\forall k\in \N)\,.
\end{equation}
\end{lemma}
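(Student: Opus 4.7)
The plan is to rely on Stirling's approximation applied to the explicit formula $\bar{w}_k = (k!)^{-1/(pk)}$, which follows immediately from $w_k = k^{-1/p}$. I would prove (\ref{lem:SLwdbe1}) first, deduce (\ref{lem:SLwdbe2}) from it via the doubling identity $\dot{\bar{w}}_n = \bar{w}_{\lceil n/2 \rceil}$, and then obtain (\ref{lem:SLwdbe3}) by multiplying the per-index bounds of (\ref{lem:SLwdbe2}) over $n = 1, \ldots, k$.

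For (\ref{lem:SLwdbe1}), Stirling's lower bound $k! \geq \sqrt{2\pi k}(k/\e)^k$ yields
\[
\bar{w}_k \leq (\e/k)^{1/p} (2\pi k)^{-1/(2pk)} \leq (\e/k)^{1/p},
\]
which is the upper inequality. Stirling's upper bound $k! \leq \sqrt{\e^2 k}(k/\e)^k$ gives symmetrically $\bar{w}_k \geq (\e/k)^{1/p}(\e^2k)^{-1/(2pk)}$, so the remaining lower inequality reduces to checking $\ln(\e^2 k)/(2k) \leq 1/\sqrt{k}$, i.e.\ $2 + \ln k \leq 2\sqrt{k}$ for all $k \in \N$. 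This is elementary: the function $f(x) = 2\sqrt{x} - 2 - \ln x$ satisfies $f(1) = 0$ and $f'(x) = (\sqrt{x}-1)/x \geq 0$ on $[1, \infty)$.

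For (\ref{lem:SLwdbe2}), the even case $n = 2m$ follows from (\ref{lem:SLwdbe1}) after rewriting $(\e/m)^{1/p} = (2\e/n)^{1/p}$ and noting $\sqrt{2}/\sqrt{n} \leq 3/\sqrt{n}$. The odd case $n = 2m-1$ requires a little more care: the upper inequality follows from $2m - 1 < 2m$, while for the lower inequality, after factoring out the common $(2\e/(2m-1))^{1/p}$, one must verify the scalar estimate
\[
\frac{3}{\sqrt{2m-1}} - \frac{1}{\sqrt{m}} \geq \ln\!\left(\frac{2m}{2m-1}\right),
\]
which I would handle via $\ln(2m/(2m-1)) \leq 1/(2m-1)$ together with $3/\sqrt{2m-1} \geq 3/\sqrt{2m}$ to obtain a lower bound of order $1/\sqrt{m}$ on the left, dominating the $O(1/m)$ right-hand side.

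For (\ref{lem:SLwdbe3}), the upper bound follows by multiplying the pointwise upper bounds from (\ref{lem:SLwdbe2}), and the lower bound follows by multiplying the pointwise lower bounds combined with the integral-comparison estimate $\sum_{n=1}^k n^{-1/2} \leq 2\sqrt{k}$, which consolidates the exponents into the single factor $\exp(-6\sqrt{k}/p)$. The main obstacle I anticipate is the odd-case scalar inequality in (\ref{lem:SLwdbe2}); beyond that, everything reduces to careful bookkeeping around Stirling. I also note in passing that multiplying the per-index bounds of (\ref{lem:SLwdbe2}) produces $(2\e)^{k/p}/(k!)^{1/p}$ rather than $(2\e)^{1/p}/(k!)^{1/p}$ on the upper side, so I would expect the statement of (\ref{lem:SLwdbe3}) to read with exponent $k/p$.
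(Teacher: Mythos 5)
Your approach mirrors the paper's essentially step by step: both start from $\bar{w}_k=(k!)^{-1/(pk)}$ and Stirling's two-sided estimate, deduce (\ref{lem:SLwdbe2}) from (\ref{lem:SLwdbe1}) by the parity split $\dot{\bar{w}}_n=\bar{w}_{\lceil n/2\rceil}$ (with the same odd-case scalar inequality, which the paper phrases as $(1+1/k)\exp(-3/\sqrt{k}+\sqrt{2}/\sqrt{k+1})\leq 1$), and obtain (\ref{lem:SLwdbe3}) by multiplying the per-index bounds together with $\sum_{n\leq k}n^{-1/2}\leq 2\sqrt{k}$. The paper proves the case $p=1$ first and takes $p$-th roots at the end, whereas you carry $p$ throughout; that is a cosmetic difference. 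Your closing observation is correct and worth flagging: multiplying the bounds of (\ref{lem:SLwdbe2}) yields $(2\e)^{k/p}/(k!)^{1/p}$, and the paper's own proof of the lower bound in the $p=1$ case indeed produces the factor $(2\e)^{k}/k!$, so the exponent in the displayed statement of (\ref{lem:SLwdbe3}) (and in its $p=1$ specialisation (\ref{nop3})) should read $(2\e)^{k/p}$ rather than $(2\e)^{1/p}$ — a typo in the statement only, since the downstream proof of Proposition~\ref{prop:SLgrowth} already uses the correct factor $(2\e)^{2k/p}$.
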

\begin{proof}
We start with the case $p=1$, that is, we set $w_k=k^{-1}$ and show that 
\begin{equation}
\label{nop1}
\exp\left (-\frac{1}{\sqrt{k}} \right ) \frac{\e}{k} 
\leq \bar{w}_k \leq  \frac{\e}{k}\quad (\forall k\in \N)\,,
\end{equation} 
\begin{equation}
\label{nop2}
\exp\left (-\frac{3}{\sqrt{k}} \right ) \frac{2\e}{k} 
\leq \dot{\bar{w}}_k \leq  \frac{2\e}{k}\quad (\forall k\in \N)\,,
\end{equation} 
\begin{equation}
\label{nop3}
\exp\left (-6\sqrt{k} \right ) \frac{2\e}{k!} 
\leq \prod_{n=1}^k\dot{\bar{w}}_n \leq  \frac{2\e}{k!}\quad (\forall k\in \N)\,.
\end{equation} 

Now, the upper bound in (\ref{nop1}) 
follows from Stirling's approximation by observing
that for all $k\in \N$ we have 
\[ \bar{w}_k^k=\frac{1}{k!}\leq \left ( \frac{\e}{k}\right )^k \,.\]
For the lower bound in (\ref{nop1}) we again use Stirling's
approximation to obtain 
\[ \bar{w}_k^k=\frac{1}{k!}\geq \frac{1}{\sqrt{\e^2k}}  
\left ( \frac{\e}{k}\right )^k\,,\]
and we see that we are done if we can show that 
\begin{equation}
\label{nopaux}
 \frac{1}{\sqrt{\e^2k}} \geq \exp(-\sqrt{k}) \quad (\forall k\in
 \N)\,. 
\end{equation}
The above, however, is true since, using the inequality
$1+x\leq \exp(x)$ which holds for all real x, we see that for all
$k\in \N$ we have 
\[ \sqrt{k} \leq \exp( \sqrt{k}-1 ) \]
from which 
\[ \sqrt{\e^2 k} \leq \exp( \sqrt{k} )\,, \]
which implies (\ref{nopaux}).  

We now turn to (\ref{nop2}). 
For the upper bound we note that, for 
$k\in \N$ even, (\ref{nop1}) implies 
\[ \dot{\bar{w}}_k=\bar{w}_{\frac{k}{2}} \leq \frac{2\e}{k}\,, \]
while for $k\in \N$ odd, (\ref{nop1}) implies 
\[ \dot{\bar{w}}_k=\bar{w}_{\frac{k+1}{2}} \leq \frac{2\e}{k+1}
\leq \frac{2\e}{k}\,. \]
For the lower bound
we note that, for 
$k\in \N$ even, (\ref{nop1}) implies 
\[ \dot{\bar{w}}_k=\bar{w}_{\frac{k}{2}} 
\geq \exp \left (- \frac{\sqrt{2}}{\sqrt{k}}\right ) \frac{2\e}{k}
\geq \exp \left (- \frac{3}{\sqrt{k}}\right ) \frac{2\e}{k}\,, \]
while for $k\in \N$ odd, (\ref{nop1}) implies
\[ \dot{\bar{w}}_k=\bar{w}_{\frac{k+1}{2}} 
\geq \exp \left ( -\frac{\sqrt{2}}{\sqrt{k+1}}\right )
\frac{2\e}{k+1}\,, \]
and we are done if we can show that for all $k\in \N$ we have 
\[ \exp \left (-\frac{\sqrt{2}}{\sqrt{k+1}} \right) \frac{1}{k+1} \geq 
\exp \left (-\frac{3}{\sqrt{k}} \right) \frac{1}{k}\,,\]
which, in turn, is equivalent to 
\begin{equation}
\label{nopaux2}
\left (1+ \frac{1}{k} \right ) 
\exp \left (-\frac{3}{\sqrt{k}} +\frac{\sqrt{2}}{\sqrt{k+1}} 
\right ) \leq 1 \quad (\forall
k\in \N)\,.
\end{equation}
The above, however, follows by observing that we have for all $k\in
\N$ 
\begin{multline*} 
\left (1+ \frac{1}{k} \right ) 
\exp \left (-\frac{3}{\sqrt{k}} +\frac{\sqrt{2}}{\sqrt{k+1}} \right ) 
\leq 
\left (1+ \frac{1}{k} \right ) 
\exp \left (-\frac{1}{\sqrt{k}} \right ) 
\\
\leq \exp \left (\frac{1}{k}-\frac{1}{\sqrt{k}} \right ) 
\leq \exp \left (-\frac{\sqrt{k}-1}{k} \right ) 
\leq 1\,.
\end{multline*}
This finishes the proof of (\ref{nop2}). 

Finally, the upper bound in (\ref{nop3}) is obvious, while the lower 
one follows
from 
\[  \prod_{n=1}^k\dot{\bar{w}}_n\geq 
\exp \left ( -3\sum_{n=1}^k\frac{1}{\sqrt{n}}\right ) 
\frac{(2\e)^{k}}{k!}
\geq 
\exp \left ( -6 \sqrt{k} \right ) 
\frac{(2\e)^{k}}{k!}\,,
\]
where we have used that $\sum_{n=1}^kn^{-1/2} \leq \int_0^k t^{-1/2}
=2k^{1/2}$ for every $k\in \N$. 

This finishes the proof of the lemma for $p=1$. 
The general case follows by taking $p$-th roots in
(\ref{nop1}), (\ref{nop2}) and (\ref{nop3}). 
\end{proof}

In order to be able to study the behaviour of $F_{\dot{\bar{w}}}$ we
require another auxiliary result. Before stating it we introduce some
more notation. If 
$f$ and $g$ are two real-valued functions defined on a neighbourhood
of $\infty$, we write 
\[ f(r) \sim g(r) \text{ as $r\to \infty$} \]
if 
\[ \lim_{r\to \infty}\frac{f(r)}{g(r)}=1\,.\]
For later use, we note the following relation between the asymptotics
of a function and that of its inverse. 
\begin{lemma}
\label{lem:genasym}
Let $a,b\in (0,\infty)$ and let $I$ and $J$ be neighbourhoods of
$\infty$. Suppose that $f:I\to J$ is a bijection with inverse
$f^{-1}:J\to I$. Then the following assertions hold. 
\begin{itemize}
\item[(i)]
If 
\[ f(r)\sim ar^b \text{ as $r\to \infty$} \]
then 
\[ f^{-1}(r) \sim \left ( \frac{r}{a} \right )^{1/b} \text{ as $r\to
  \infty$}\,. \]
\item[(ii)] If 
\[ \log f(r)\sim ar^b \text{ as $r\to \infty$} \]
then 
\[ f^{-1}(r) \sim \left ( \frac{\log r}{a} \right )^{1/b} \text{ as
  $r\to \infty$}\,. \]
\item[(iii)] If 
\[ \log f(r)\sim a(\log r)^b \text{ as $r\to \infty$} \]
then 
\[ \log f^{-1}(r) \sim \left ( \frac{\log r}{a} \right )^{1/b} \text{ as $r\to \infty$}\,. \]
\end{itemize}
\end{lemma}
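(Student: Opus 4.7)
The plan is to prove all three parts by the same substitution trick: set $r = f^{-1}(s)$ in the given asymptotic relation, and then apply a suitable continuous monotone function (a power or a logarithm) to extract the desired asymptotic for $f^{-1}$. The proof of each part therefore takes only a handful of lines; the bulk of the argument is a single preliminary observation that needs to be made once.

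Before beginning the substitutions, I would establish that $f^{-1}(s)\to\infty$ as $s\to\infty$ in each of the three settings. In case (i) the hypothesis $f(r)\sim ar^b$ with $a,b>0$ forces $f(r)\to\infty$; in cases (ii) and (iii) the hypothesis $\log f(r)\to\infty$ does the same. Combined with the bijectivity of $f:I\to J$ between neighbourhoods of $\infty$, this forces $f^{-1}(s)\to\infty$ as $s\to\infty$, justifying the substitution $r=f^{-1}(s)$ and ensuring that every quantity we take a positive power of tends to infinity.

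For (i), substituting $r=f^{-1}(s)$ into $f(r)\sim ar^b$ yields
\[ s \;=\; f\bigl(f^{-1}(s)\bigr) \;\sim\; a\,f^{-1}(s)^{b} \qquad \text{as } s\to\infty, \]
so $f^{-1}(s)^{b}\sim s/a$; taking $b$-th roots (which preserves $\sim$ on positive quantities tending to infinity, since $x\mapsto x^{1/b}$ is continuous at $1$ when applied to the ratio) gives the claim. For (ii), the same substitution into $\log f(r)\sim ar^{b}$ gives $\log s\sim a\,f^{-1}(s)^{b}$, and taking $b$-th roots produces $f^{-1}(s)\sim\bigl((\log s)/a\bigr)^{1/b}$. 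For (iii), the substitution into $\log f(r)\sim a(\log r)^{b}$ gives $\log s\sim a\bigl(\log f^{-1}(s)\bigr)^{b}$, and taking $b$-th roots produces $\log f^{-1}(s)\sim\bigl((\log s)/a\bigr)^{1/b}$.

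I do not expect any real obstacle. The only point that needs to be made with care is the general principle that if $u(s),v(s)\to\infty$ and $u(s)\sim v(s)$, then $u(s)^{1/b}\sim v(s)^{1/b}$ for any $b>0$; this is immediate from writing $u(s)^{1/b}/v(s)^{1/b}=(u(s)/v(s))^{1/b}$ and using continuity of the $b$-th root at $1$. Once this is noted, the three parts are essentially identical templated inversions, differing only in whether the outer function applied before inverting is the identity, $\log$, or $\log$ followed by the $b$-th root of a $\log$.
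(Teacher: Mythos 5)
Your proof is correct and uses the same core idea as the paper: substitute $r = f^{-1}(s)$ (equivalently, run $r\to\infty$ through $f$) and invoke the fact that taking a fixed positive power preserves the relation $\sim$. The only organizational difference is that the paper derives (ii) and (iii) by applying part (i) to the auxiliary bijections $\log\circ f$ and $\log\circ f\circ\exp$ and then unwinding, whereas you carry out the substitution directly in each case; the two routes are computationally identical and each takes a few lines, so neither buys anything over the other.
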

\begin{proof} \ 
\begin{itemize}
\item[(i)]
This follows from 
\[ \lim_{r\to \infty} \frac{(r/a)^{1/b}}{f^{-1}(r)}=
\lim_{r\to \infty}\frac{(f(r)/a)^{1/b}}{f^{-1}(f(r))}=
\left ( \lim_{r\to \infty} \frac{f(r)}{ar^b}\right )^{1/b}=1\,.\]
\item[(ii)]
If 
\[ (\log \circ f)(r) \sim a r^b  
\text{ as $r\to \infty$}\,,\]
then by (i) we have 
\[ (\log \circ f)^{-1}(r) \sim \left ( \frac{r}{a} \right )^{1/b}
\text{ as $r\to \infty$}\,,\]
so 
\[ (f^{-1}\circ \exp)(r) \sim \left ( \frac{r}{a} \right )^{1/b}
\text{ as $r\to \infty$}\,,\]
hence 
\[ f^{-1}(r) \sim \left ( \frac{\log r}{a} \right )^{1/b}
\text{ as $r\to \infty$}\,.\]
\item[(iii)]
If 
\[ (\log \circ f)(r) \sim a (\log r)^b  
\text{ as $r\to \infty$}\,,\]
then 
\[ (\log \circ f\circ \exp )(r) \sim a r^b  
\text{ as $r\to \infty$}\,,\]
so by (i) we have 
\[ (\log \circ f \circ \exp )^{-1}(r) \sim \left ( \frac{r}{a} \right )^{1/b}
\text{ as $r\to \infty$}\,,\]
hence 
\[ (\log \circ f^{-1}\circ \exp)(r) \sim \left ( \frac{r}{a} \right )^{1/b}
\text{ as $r\to \infty$}\,,\]
whence 
\[ (\log \circ f^{-1})(r) \sim \left ( \frac{\log r}{a} \right )^{1/b}
\text{ as $r\to \infty$}\,.\]
\end{itemize}
\end{proof}

We are now able to state the following result. 
\begin{lemma}
\label{lem:SLgrowth}
Suppose that $p,b\in (0,\infty)$. 
Let $\Phi_p^{L,u}$ and  $\Phi_{p,b}^{L,l}$ be two 
functions
given by the power series 
\[ \Phi_p^{L,u}(r)=\sum_{k=0}^\infty \frac{1}{(k!)^{1/p}}r^k \]
\[ \Phi_{p,b}^{L,l}(r)=\sum_{k=0}^\infty
\frac{\exp(-b\sqrt{k})}{(k!)^{1/p}}r^k\,. 
\] 
Then
$\Phi_p^{L,u}$ and $\Phi_{p,b}^{L,l}$ extend to entire functions with 
the following asymptotics
\[ \log \Phi_p^{L,u}(r)\sim \log \Phi_{p,b}^{L,l}(r) \sim \frac{1}{p}r^p
\text{ as $r\to\infty$}\,.\]
\end{lemma}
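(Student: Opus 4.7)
The plan is to pin down $\log\Phi_p^{L,u}$ and $\log\Phi_{p,b}^{L,l}$ by matching upper and lower bounds that concentrate around the dominant term in each series. An application of the ratio test (using that $(k!)^{1/p}\to\infty$ faster than any geometric rate) immediately shows that both series define entire functions, so the real content is the asymptotic behaviour on the positive real axis.

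For the upper bound on $\log\Phi_p^{L,u}(r)$, I would invoke Stirling's lower bound $k!\ge \sqrt{2\pi k}(k/\e)^k$ from the previous section to get, for all $k\ge 1$,
\[ \frac{r^k}{(k!)^{1/p}}\le (2\pi k)^{-1/(2p)}\left(\frac{\e r^p}{k}\right)^{k/p}\,. \]
The function $k\mapsto (\e r^p/k)^{k/p}$ is maximised over $k>0$ at $k=r^p$, with value $\exp(r^p/p)$. I would then split the series at $k=\alpha r^p$ for a fixed $\alpha>\e$: the lower block has at most $\alpha r^p+1$ terms, each bounded by the maximum, giving a contribution $O(r^p)\exp(r^p/p)$; in the upper block the ratio of consecutive terms is $r/k^{1/p}\le \alpha^{-1/p}<1$, so it sums to a geometric tail of the same order. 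Hence $\Phi_p^{L,u}(r)\le P(r)\exp(r^p/p)$ for some polynomial $P$, so $\log\Phi_p^{L,u}(r)\le r^p/p+O(\log r)$. Since the coefficients of $\Phi_{p,b}^{L,l}$ are smaller, the same upper bound holds for it.

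For the lower bounds, I would isolate the single term at $k_r=\lfloor r^p\rfloor$. Using Stirling's upper bound $k!\le \sqrt{\e^2 k}(k/\e)^k$ and absorbing the $O(r)$ discrepancy between $r^{k_r}$ and $r^{r^p}$, one obtains
\[ \frac{r^{k_r}}{(k_r!)^{1/p}}\ge \frac{\exp(r^p/p)}{Q(r)} \]
for some polynomially growing $Q$, whence $\log\Phi_p^{L,u}(r)\ge r^p/p-O(\log r)$. For $\Phi_{p,b}^{L,l}$ the same term acquires the extra factor $\exp(-b\sqrt{k_r})\ge \exp(-br^{p/2})$, which only subtracts $br^{p/2}=o(r^p)$ from the logarithm, giving $\log\Phi_{p,b}^{L,l}(r)\ge r^p/p-O(r^{p/2})$.

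Combining the matching upper and lower bounds yields
\[ \lim_{r\to\infty}\frac{\log\Phi_p^{L,u}(r)}{r^p/p}=\lim_{r\to\infty}\frac{\log\Phi_{p,b}^{L,l}(r)}{r^p/p}=1\,. \]
The only points requiring some care are the handling of the integer rounding in $k_r=\lfloor r^p\rfloor$ (which costs only a polynomial factor) and verifying that the sub-leading $\exp(-b\sqrt{k})$ weight is genuinely negligible against $\exp(r^p/p)$; no delicate saddle-point analysis is needed because concentration at a single peak already provides the sharp leading order.
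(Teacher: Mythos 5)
Your proof is correct, and it handles the upper bound by a genuinely different (and arguably cleaner) route than the paper, while the lower bound is essentially the same single-term argument. For the upper bound the paper splits into cases: for $p\leq 1$ it applies the $\ell_p$--$\ell_1$ inequality $\sum x_k\leq(\sum x_k^p)^{1/p}$ to get the clean bound $\Phi_p^{L,u}(r)\leq\exp(r^p/p)$ with no polynomial prefactor, while for $p>1$ it splits the sum at $k=2\e r^p$ and applies H\"older's inequality to the head and Stirling to the tail. You instead prove, via Stirling alone, the uniform per-term bound
\[\frac{r^k}{(k!)^{1/p}}\leq (2\pi k)^{-1/(2p)}\Bigl(\tfrac{\e r^p}{k}\Bigr)^{k/p}\leq\exp(r^p/p)\,,\]
then split at $k=\alpha r^p$ with $\alpha>\e$, bound the head by (number of terms)$\times$(max term) and the tail by a geometric series (ratio of consecutive terms $\leq\alpha^{-1/p}<1$). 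This avoids the case distinction in $p$ entirely, at the modest cost of always picking up a polynomial prefactor, which is harmless on the log scale. Your lower-bound argument — isolating the term at $k_r=\lfloor r^p\rfloor$, applying the upper Stirling bound to $(k_r!)^{1/p}$, and noting the extra factor $\exp(-b\sqrt{k_r})\geq\exp(-br^{p/2})$ is sub-dominant — matches the paper's choice $r^p-1<k\leq r^p$ essentially verbatim. One minor point you might make explicit: your per-term Stirling bound holds for $k\geq 1$ only, so the $k=0$ term ($=1$) needs to be absorbed separately, which is trivial.
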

\begin{proof}
Using Stirling's approximation we see that both $\Phi_p^{L,u}$ and
$\Phi_{p,b}^{L,l}$ extend to entire functions. Since 
$\Phi_{p,b}^{L,l}(r) \leq \Phi_p^{L,u}(r)$ for all $r\in (0,\infty)$
the remaining assertions will hold if we can show that 
\begin{equation}
\label{SLasympsup}
\limsup_{r\to \infty} pr^{-p}\log \Phi_p^{L,u}(r) \leq 1 
\end{equation} 
and 
\begin{equation}
\label{SLasympinf}
\liminf_{r\to \infty} pr^{-p}\log \Phi_{p,b}^{L,l}(r) \geq 1\,. 
\end{equation} 
We start with (\ref{SLasympsup}). For $p\leq 1$ we have, using the
$\ell_p$-$\ell_1$ inequality 
\[ \sum_{k=0}^\infty x_k \leq \left ( \sum_{k=0}^\infty x_k^p \right
)^{1/p}\,,\]
which holds for all positive sequences $(x_k)_{k=0}^\infty$, the bound 
\[ \Phi_p^{L,u}(r)\leq \left ( \sum_{k=0}^\infty 
   \frac{r^{pk}}{k!}\right ) ^{1/p}=
\exp\left ( \frac{1}{p}r^p \right )\,,\]
and (\ref{SLasympsup}) holds in this case. For $p>1$ we split the sum
as follows 
\[  \Phi_p^{L,u}(r) = \sum_{k<2\e r^p} \frac{r^k}{(k!)^{1/p}} + 
\sum_{k\geq 2\e r^p} \frac{r^k}{(k!)^{1/p}}\,.\]
In order to bound the first term we use H\"older's inequality to
obtain 
\begin{align*}
\sum_{k<2\e r^p} \frac{r^k}{(k!)^{1/p}} 
& \leq \left ( \sum_{k<2\e r^p} \frac{r^{pk}}{k!} \right )^{1/p} 
\left ( \sum_{k<2\e r^p} 1 \right )^{(p-1)/p} \\
& \leq (1+2\e r^p)^{(p-1)/p}\exp \left (\frac{1}{p} r^p\right )\,.
\end{align*} 
For the second term, we use Stirling's approximation and obtain 
\[ \sum_{k\geq 2\e r^p} \frac{r^k}{(k!)^{1/p}}
\leq \sum_{k\geq 2\e r^p} \left ( \frac{\e r^p}{k} \right )^{k/p}
\leq  \sum_{k\geq 2\e r^p} 2^{-k/p} \leq 2\cdot 2^{-(2\e r^p)/p}\,.\]
Combining these two estimates, the bound (\ref{SLasympsup}) follows for
$p> 1$ as well. 

We now turn to the proof of (\ref{SLasympinf}). For a given $r\geq 1$
choose $k\in \N$ such that 
\[ r^p-1<k\leq r^p\,.\]
Since all terms in the sum defining $\Phi_{p,b}^{L,l}$ are positive it 
follows that 
\[ \Phi_{p,b}^{L,l}(r) \geq \frac{\exp(-b\sqrt{k})}{(k!)^{1/p}}r^k\,.\]
Now 
\[ r^k\geq r^{r^p-1} \]
and, using Stirling's approximation, 
\[ (k!)^{1/p}\leq (\e^2 k)^{1/(2p)} 
\left ( \frac{k}{\e} \right )^{k/p} 
\leq 
( \e^2 r^p )^{1/(2p)} \frac{r^{r^p}}{\e^{\frac{1}{p}r^p}}\,.
\]
Furthermore, we have 
\[ \exp(-b\sqrt{k}) \geq \exp( -b r^{p/2})\,.\]
Thus, combining all previous estimates and simplifying we have 
\[  \Phi_p^{L,u}(r) \geq \frac{\exp(-br^{p/2})}{\e^{1/p}r^{3/2}} \left (
\frac{1}{p}r^p\right )\,,
\] 
and the bound (\ref{SLasympinf}) follows.  
\end{proof}

We are now ready to give upper and lower bounds for 
$F_{\dot{\bar{w}}}$ as well as its asymptotics 
for $w$ generating the Schatten-Lorentz ideal. 

\begin{proposition}
\label{prop:SLgrowth}
Let $p\in (0,\infty)$ and let $w_k=k^{-1/p}$ for $k\in \N$. Then for
all $r>0$ we have 
\begin{equation}
\label{propPhieq}
(1+r)\Phi_{p/2,12/p}^{L,l}\left ( (2\e)^{2/p}
  (Cr)^2 \right ) 
\leq 
F_{\dot{\bar{w}}}(r) \leq (1+r)
\Phi_{p/2}^{L,u}\left ( (2\e)^{2/p}
  (Cr)^2 \right ) 
\end{equation}
Moreover 
\[ \log F_{\dot{\bar{w}}}(r) \sim \frac{4\e C^p}{p}r^p \text{ as $r\to
\infty$}.  
\]
\end{proposition}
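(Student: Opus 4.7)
The plan is to feed the pointwise estimates of Lemma~\ref{lem:SLwdb} into the definition~(\ref{functionc}) of $F_{\dot{\bar{w}}}$ and then to extract the claimed logarithmic asymptotics from Lemma~\ref{lem:SLgrowth}. A useful simplification up front is that, since $w_k=k^{-1/p}$, we have $w_1=\bar{w}_1=\dot{\bar{w}}_1=1$, so the prefactor $(1+r\dot{\bar{w}}_1)$ in~(\ref{functionc}) equals $(1+r)$ exactly; no estimation is needed on this piece, which accounts for the clean $(1+r)$ prefactor on both sides of~(\ref{propPhieq}).

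For the series in~(\ref{functionc}) I would apply the estimate~(\ref{lem:SLwdbe3}) to $\prod_{n=1}^k \dot{\bar{w}}_n$ and then square, yielding
\[
\bigl(\dot{\bar{w}}_1\cdots\dot{\bar{w}}_k\bigr)^2\leq \frac{(2\e)^{2k/p}}{(k!)^{2/p}}
\quad\text{and}\quad
\bigl(\dot{\bar{w}}_1\cdots\dot{\bar{w}}_k\bigr)^2\geq \frac{\exp\bigl(-\frac{12}{p}\sqrt{k}\bigr)(2\e)^{2k/p}}{(k!)^{2/p}}.
\]
Substituting these into $F_{\dot{\bar{w}}}(r)$ converts the bracketed series termwise into
\[
\sum_{k=0}^\infty\frac{1}{(k!)^{2/p}}\bigl((2\e)^{2/p}(Cr)^2\bigr)^k=\Phi_{p/2}^{L,u}\bigl((2\e)^{2/p}(Cr)^2\bigr)
\]
for the upper bound, and into $\Phi_{p/2,12/p}^{L,l}\bigl((2\e)^{2/p}(Cr)^2\bigr)$ for the lower bound, since the factor $\exp(-\frac{12}{p}\sqrt{k})$ matches the parameter $b=12/p$ in the definition of $\Phi_{p,b}^{L,l}$. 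Multiplying by the common prefactor $(1+r)$ then gives~(\ref{propPhieq}).

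For the asymptotic statement I would apply Lemma~\ref{lem:SLgrowth} with parameter $p/2$ (and $b=12/p$), obtaining $\log \Phi_{p/2}^{L,u}(s)\sim \log \Phi_{p/2,12/p}^{L,l}(s)\sim \frac{2}{p}\, s^{p/2}$ as $s\to\infty$. Substituting $s=(2\e)^{2/p}(Cr)^2$ yields
\[
\frac{2}{p}\bigl((2\e)^{2/p}(Cr)^2\bigr)^{p/2}=\frac{2}{p}\cdot 2\e\cdot C^p r^p=\frac{4\e C^p}{p}\, r^p,
\]
while the contribution $\log(1+r)=O(\log r)$ is $o(r^p)$ and hence negligible. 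Sandwiching $\log F_{\dot{\bar{w}}}(r)$ between the resulting bounds delivers the claimed equivalence.

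I do not anticipate a serious obstacle; the argument is essentially bookkeeping. The one point demanding vigilance is tracking how the two parameter changes combine: the squaring turns the factorial exponent $1/p$ into $2/p$ (so the relevant $\Phi$-index is $p/2$), while the substitution $s=(2\e)^{2/p}(Cr)^2$ simultaneously converts $s^{p/2}$ into $(Cr)^p$ multiplied by the factor $2\e$, producing the constant $4\e C^p/p$ in the final asymptotic.
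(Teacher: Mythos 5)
Your proposal is correct and follows the same route as the paper: square the product bound from Lemma~\ref{lem:SLwdb}(\ref{lem:SLwdbe3}), substitute into the definition~(\ref{functionc}) of $F_{\dot{\bar{w}}}$ to identify the series with $\Phi_{p/2}^{L,u}$ and $\Phi_{p/2,12/p}^{L,l}$ evaluated at $(2\e)^{2/p}(Cr)^2$, and then invoke Lemma~\ref{lem:SLgrowth} to deduce the asymptotic. Your observation that $\dot{\bar{w}}_1=1$ (so the prefactor $(1+r\dot{\bar{w}}_1)$ is exactly $(1+r)$) and your explicit tracking of the parameter change $p\mapsto p/2$ and the $2\e C^p$ factor are small but correct elaborations of the paper's terser argument.
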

\begin{proof}
By Lemma~\ref{lem:SLwdb} we have for all $k\in \N$
\[ 
\exp \left ( -\frac{12}{p} \sqrt{k} \right ) 
\frac{(2\e)^{2k/p}}{(k!)^{2/p}}
\leq 
\prod_{n=1}^k\dot{\bar{w}}_n^2\leq \frac{(2\e)^{2k/p}}{(k!)^{2/p}}\,.
  \]
Using the definition of $F_{\dot{\bar{w}}}$ in (\ref{functionc})
the inequalities in 
(\ref{propPhieq}) follow, which, using Lemma~\ref{lem:SLgrowth}, imply 
the remaining assertion.  
\end{proof}

We now turn our attention to the exponential cases, which are
compactness classes $E_w$ with weights of the form
$w_k=\exp(-ak^\alpha)$ with $a,\alpha\in (0,\infty)$. We start with
two technical lemmas. 

\begin{lemma}
\label{lem:Ewdb}
Let $a,\alpha \in (0,\infty)$, and let $w_k=\exp(-ak^\alpha)$ for $k\in
\N$. Then there are strictly positive real constants
$\bar{c}_{a,\alpha}$, 
$\dot{\bar{c}}_{a,\alpha}$ and  
$c_{a,\alpha}$ such that the following 
inequalities hold for every $k\in \N$
\begin{equation}
\label{lem:Ewdbe1}
\exp\left (-\frac{a}{\alpha +1}k^\alpha - 
\bar{c}_{a,\alpha} k^{\alpha-1/2} \right )
\leq \bar{w}_k \leq  
\exp\left (-\frac{a}{\alpha +1}k^\alpha \right )\,,
\end{equation}
 \begin{equation}
\label{lem:Ewdbe2}
\exp\left (-\frac{2^{-\alpha}a}{\alpha +1}k^\alpha - 
\dot{\bar{c}}_{a,\alpha} k^{\alpha-1/2} \right )
\leq \dot{\bar{w}}_k \leq  
\exp\left (-\frac{2^{-\alpha}a}{\alpha +1}k^\alpha \right )\,,
\end{equation} 
\begin{equation}
\label{lem:Ewdbe3}
\exp\left (-\frac{2^{-\alpha}a}{(\alpha +1)^2}k^{\alpha+1}- 
c_{a,\alpha} k^{\alpha+1/2} \right )
\leq \prod_{n=1}^k\dot{\bar{w}}_n \leq  
\exp\left (-\frac{2^{-\alpha}a}{(\alpha +1)^2}k^{\alpha+1} \right
)\,. 
\end{equation} 
\end{lemma}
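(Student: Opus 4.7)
The plan is to reduce everything to elementary integral comparisons for the sums $\sum_{n=1}^k n^\beta$ after passing to logarithms. Since $\log \bar w_k=-\frac{a}{k}\sum_{n=1}^k n^\alpha$, the first set of inequalities (\ref{lem:Ewdbe1}) is just a two-sided estimate for this sum. The monotonicity of $t\mapsto t^\alpha$ for $\alpha>0$ gives the standard bracket
\[
\frac{k^{\alpha+1}}{\alpha+1}=\int_0^k t^\alpha\,dt\leq\sum_{n=1}^k n^\alpha\leq\int_1^{k+1}t^\alpha\,dt\leq\frac{(k+1)^{\alpha+1}}{\alpha+1},
\]
and the upper bound in (\ref{lem:Ewdbe1}) is an immediate consequence of the left-hand inequality. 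For the lower bound I will expand $(k+1)^{\alpha+1}\leq k^{\alpha+1}+C_\alpha k^\alpha$ (by the mean value theorem applied to $t\mapsto t^{\alpha+1}$), producing $\frac{1}{k}\sum n^\alpha\leq \frac{k^\alpha}{\alpha+1}+\widetilde C_\alpha k^{\alpha-1}$, and since $k^{\alpha-1}\leq k^{\alpha-1/2}$ for $k\in\N$ this correction is absorbed into a single constant $\bar c_{a,\alpha}$.

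The second set of inequalities (\ref{lem:Ewdbe2}) then follows by a routine case analysis on the parity of $k$. For $k=2j$ the identity $\dot{\bar w}_k=\bar w_{k/2}$ lets me substitute $k/2$ into (\ref{lem:Ewdbe1}), which gives both sides of (\ref{lem:Ewdbe2}) after multiplying the exponents by the appropriate powers of $2^{-1}$. For $k=2j-1$ the same substitution at $(k+1)/2$ produces the corresponding bounds, and the discrepancy $((k+1)/2)^\alpha-(k/2)^\alpha=O(k^{\alpha-1})$, again by the mean value theorem, is absorbed into a possibly larger constant $\dot{\bar c}_{a,\alpha}$.

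For the third set (\ref{lem:Ewdbe3}), I will take $\prod_{n=1}^k$ of (\ref{lem:Ewdbe2}) and reduce matters to controlling $\sum_{n=1}^k n^\alpha$ from above and $\sum_{n=1}^k n^{\alpha-1/2}$ from above. Both are handled by the same kind of integral comparison as in the first step: $\sum n^\alpha\leq \frac{k^{\alpha+1}}{\alpha+1}+O(k^\alpha)\leq \frac{k^{\alpha+1}}{\alpha+1}+O(k^{\alpha+1/2})$, while $\sum n^{\alpha-1/2}=O(k^{\alpha+1/2})$ uniformly in $\alpha\in(0,\infty)$ because $\alpha+1/2>0$, regardless of whether $t\mapsto t^{\alpha-1/2}$ is increasing or decreasing (in the decreasing case one uses the bound $\sum_{n=1}^k f(n)\leq \int_0^k f$).

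There is no conceptual obstacle; the only mild nuisance is bookkeeping the constants $\bar c_{a,\alpha},\dot{\bar c}_{a,\alpha},c_{a,\alpha}$ through the parity case split for $\dot{\bar w}$ and verifying that the $O(k^{\alpha-1})$ errors at the level of $\bar w$ sum to the coarser $O(k^{\alpha+1/2})$ correction at the level of the product. In particular, the power $\alpha-1/2$ (rather than the sharper $\alpha-1$) in (\ref{lem:Ewdbe1}) and (\ref{lem:Ewdbe2}) is exactly what is needed to accommodate the integral estimate $\sum n^{\alpha-1/2}=O(k^{\alpha+1/2})$ in the final step, so the three sets of bounds are tuned to fit together without loss.
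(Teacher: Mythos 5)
Your proof is correct and follows essentially the same route as the paper: rewrite $\log \bar{w}_k=-\tfrac{a}{k}\sum_{n=1}^k n^\alpha$, bracket $\sum_{n=1}^k n^\alpha$ by integral comparison, absorb the $(k+1)^{\alpha+1}$ overshoot into a lower-order $k^{\alpha-1/2}$ term, transfer to $\dot{\bar{w}}$ by parity, and then sum over $n$. The paper isolates the absorption step as a small auxiliary inequality, but the content and the order of the estimates are the same as yours.
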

\begin{proof}
We start with (\ref{lem:Ewdbe1}). First we note that 
\[ \bar{w}_k^k =\exp \left ( -a \sum_{n=1}^kn^\alpha \right )
\quad (\forall k\in \N)\,. \]
Since 
$\int_0^kt^\alpha\,dt \leq \sum_{n=1}^kn^\alpha \leq 
\int_0^{k+1} t^\alpha\,dt$,  
we have 
\begin{equation}
\label{eq:alphasum}
\frac{1}{\alpha +1}k^{\alpha+1}\leq    \sum_{n=1}^kn^\alpha 
\leq 
\frac{1}{\alpha +1}(k+1)^{\alpha+1} \quad
(\forall k\in \N)\,,
\end{equation}
from which the upper bound of (\ref{lem:Ewdbe1}) readily follows,
while the lower bound can be obtained by observing that there is a constant
$K_1>0$ such that 
\[ \frac{(k+1)^{\alpha+1}}{k}\leq k^\alpha +K_1 k^{\alpha -1/2}
\quad (\forall k \in \N)\,.\]

For the next pair of inequalities (\ref{lem:Ewdbe2}) we note that, 
using (\ref{lem:Ewdbe1}), we have  
for $k\in \N$ even 
\[ \dot{\bar{w}}_k= \bar{w}_{\frac{k}{2}} \leq 
  \exp\left (-\frac{2^{-\alpha}a}{\alpha +1}k^\alpha \right )\,, \]
while for $k\in \N$ odd 
\[ \dot{\bar{w}}_k= \bar{w}_{\frac{k+1}{2}} \leq 
  \exp\left (-\frac{2^{-\alpha}a}{\alpha +1}(k+1)^\alpha \right ) 
\leq   
\exp\left (-\frac{2^{-\alpha}a}{\alpha +1}k^\alpha \right )\,,
\]
and the upper bound follows. For the lower bound we note that by 
(\ref{lem:Ewdbe1}), we have  
for $k\in \N$ even 
\begin{equation*}
 \dot{\bar{w}}_k = \bar{w}_{\frac{k}{2}}
  \geq  
  \exp\left (-\frac{2^{-\alpha}a}{\alpha +1}k^\alpha 
   -2^{-\alpha+1/2}\bar{c}_{a,\alpha} k^{\alpha -1/2}
\right )\,, 
\end{equation*}
while for $k\in \N$ odd we have 
\begin{equation*}
 \dot{\bar{w}}_k = \bar{w}_{\frac{k+1}{2}} 
  \geq  
  \exp\left (-\frac{2^{-\alpha}a}{\alpha +1}(k+1)^\alpha 
   -2^{-\alpha+1/2}\bar{c}_{a,\alpha} (k+1)^{\alpha -1/2}
\right )\,, 
\end{equation*}
from which the lower bound follows for all $k\in \N$ by observing that
for any $\beta > 0$ and any $K_2>0$ there is a constant $K_3>0$ such that 
\begin{equation}
\label{Kaux}
(k+1)^{\beta} + K_2(k+1)^{\beta -1/2}
\leq k^\beta +K_3 k^{\beta -1/2}
\quad (\forall k \in \N)\,.
\end{equation}
Finally, using (\ref{lem:Ewdbe2}) and (\ref{eq:alphasum}), 
the upper bound in (\ref{lem:Ewdbe3}) follows, since we have for all $k\in \N$ 
\[ \prod_{n=1}^k\dot{\bar{w}}_n \leq 
\exp \left (  - \frac{2^{-\alpha}a}{\alpha +1} \sum_{n=1}^k n^\alpha
\right ) 
\leq 
\exp \left (  - \frac{2^{-\alpha}a}{(\alpha +1)^2} k^{\alpha+1} 
\right )\,. 
\]
The lower bound in turn follows from 
\begin{align*}
\prod_{n=1}^k\dot{\bar{w}}_n & \geq 
\exp \left (  - \frac{2^{-\alpha}a}{\alpha +1} \sum_{n=1}^k n^\alpha
- \dot{\bar{c}}_{a,\alpha}\sum_{n=1}^k n^{\alpha-1/2}
\right ) \\
& \geq 
\exp \left (  - \frac{2^{-\alpha}a}{(\alpha +1)^2} (k+1)^{\alpha+1}
- \frac{2\dot{\bar{c}}_{a,\alpha}}{2\alpha +1} (k+1)^{\alpha+1/2}
\right )
\end{align*}
and (\ref{Kaux}). 
\end{proof}

\begin{lemma}
\label{lem:Egrowth}
Suppose that $a,\alpha,b\in (0,\infty)$. 
Let $\Phi_{a,\alpha}^{E,u}$ and  $\Phi_{a,\alpha,b}^{E,l}$ be two 
functions
given by the power series 
\begin{equation}
\label{PhiEudef}
\Phi_{a,\alpha}^{E,u}(r)=\sum_{k=0}^\infty \exp( -ak^{\alpha+1})r^k\,,
\end{equation}
\begin{equation}
\label{PhiEldef}
\Phi_{a,\alpha,b}^{E,l}(r)=\sum_{k=0}^\infty
\exp(-ak^{\alpha+1} -bk^{\alpha+1/2}) r^k\,. 
\end{equation} 
Then
$\Phi_{a,\alpha}^{E,u}$ and $\Phi_{a,\alpha,b}^{E,l}$ 
extend to entire functions with 
the following asymptotics
\[ \log \Phi_{a,\alpha}^{E,u}(r)\sim 
\log \Phi_{a,\alpha,b}^{E,l}(r) \sim 
a^{-1/\alpha} \frac{\alpha}{(\alpha+1)^{1+1/\alpha}}
\left ( \log r \right )^{1+1/\alpha}
\text{ as $r\to\infty$}\,.\]
\end{lemma}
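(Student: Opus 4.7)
The plan is a discrete Laplace/saddle-point analysis of the two power series. Entirety is immediate: the coefficient $\exp(-ak^{\alpha+1})$ of $\Phi_{a,\alpha}^{E,u}$ decays faster than any geometric sequence since $\alpha>0$, so the series converges for every $r\in\C$, and the same holds a fortiori for $\Phi_{a,\alpha,b}^{E,l}$, whose coefficients are pointwise smaller. Moreover $\Phi_{a,\alpha,b}^{E,l}(r)\leq \Phi_{a,\alpha}^{E,u}(r)$ for all $r>0$, so to obtain both asymptotic equivalences simultaneously it suffices to establish the matching upper bound for $\Phi_{a,\alpha}^{E,u}$ and lower bound for $\Phi_{a,\alpha,b}^{E,l}$.

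The guiding heuristic is to locate the index that maximises the generic summand. Differentiating $x\mapsto -ax^{\alpha+1}+x\log r$ one finds the maximiser
\[
x^{\ast}=\left(\frac{\log r}{a(\alpha+1)}\right)^{1/\alpha}
\]
with maximum value
\[
\kappa(r):=a^{-1/\alpha}\,\frac{\alpha}{(\alpha+1)^{1+1/\alpha}}\,(\log r)^{1+1/\alpha}\,,
\]
which is precisely the target constant. For the lower bound I retain only a single term: with $k_r:=\lfloor x^{\ast}\rfloor$,
\[
\Phi_{a,\alpha,b}^{E,l}(r)\geq \exp\!\left(-ak_r^{\alpha+1}-bk_r^{\alpha+1/2}\right)r^{k_r}\,.
\]
A short calculation gives $-ak_r^{\alpha+1}+k_r\log r=\kappa(r)(1+o(1))$ as $r\to\infty$, while the extra penalty $bk_r^{\alpha+1/2}$ is only of order $(\log r)^{1+1/(2\alpha)}$, strictly smaller than $(\log r)^{1+1/\alpha}$ for every $\alpha>0$. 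Taking logs yields $\log\Phi_{a,\alpha,b}^{E,l}(r)\geq \kappa(r)(1+o(1))$.

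For the upper bound I split the series at $\theta x^{\ast}$, choosing $\theta>(\alpha+1)^{1/\alpha}$. Each summand in the head $0\leq k\leq \theta x^{\ast}$ is bounded by $\exp(\kappa(r))$ by definition of the maximum, and there are only $O((\log r)^{1/\alpha})$ such summands, so the head contributes at most $\exp(\kappa(r)+O(\log\log r))$. For the tail $k>\theta x^{\ast}$, the choice of $\theta$ forces $-ak^\alpha+\log r\leq -\delta\log r$ with $\delta:=\theta^{\alpha}/(\alpha+1)-1>0$, so $\exp(-ak^{\alpha+1})r^k\leq r^{-\delta k}$ and the tail sum is $O(1)$. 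Taking logs gives $\log\Phi_{a,\alpha}^{E,u}(r)\leq \kappa(r)(1+o(1))$, and combining with the lower bound via $\Phi_{a,\alpha,b}^{E,l}\leq \Phi_{a,\alpha}^{E,u}$ squeezes both ratios to $1$.

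The main obstacle is not conceptual but careful book-keeping: one must verify that replacing $x^{\ast}$ by its integer part $k_r$ in the lower bound, and bounding the head sum by its length times the maximum in the upper bound, both contribute only a multiplicative $(1+o(1))$ factor in the exponent. Once these are checked the asymptotic equivalences follow at once, and no further results beyond elementary calculus (and the pointwise inequality between the two series) are needed.
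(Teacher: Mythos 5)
Your proof is correct and takes essentially the same approach as the paper: both reduce to a single upper bound for $\Phi_{a,\alpha}^{E,u}$ and a single lower bound for $\Phi_{a,\alpha,b}^{E,l}$ via the pointwise inequality, obtain the lower bound from the single term at $\lfloor x^*\rfloor$, and obtain the upper bound by splitting the series near the saddle point and bounding the head by (number of terms)\,$\times$\,(maximal term) and the tail by a convergent geometric series. The only cosmetic difference is your choice of cut $\theta x^*$ with a fixed $\theta>(\alpha+1)^{1/\alpha}$ versus the paper's $k(r)=(\log(2r)/a)^{1/\alpha}$, which amount to the same thing.
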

\begin{proof}
It is not difficult to see that both $\Phi_{a,\alpha}^{E,u}$ and
$\Phi_{a,\alpha,b}^{E,l}$ extend to entire functions. As in the proof
of the analogous result for the Schatten-Lorentz ideal, we note that since 
$\Phi_{a,\alpha,b}^{E,l}(r) \leq \Phi_{a,\alpha}^{E,u}(r)$ for all 
$r\in (0,\infty)$, 
the remaining assertions will hold if we can show that 
\begin{equation}
\label{Easympsup}
\limsup_{r\to \infty}
a^{1/\alpha} \frac{(\alpha+1)^{1+1/\alpha}}{\alpha}
\left ( \log r \right )^{-1-1/\alpha} 
\log \Phi_{a,\alpha}^{E,u}(r) \leq 1 
\end{equation} 
and 
\begin{equation}
\label{Easympinf}
\liminf_{r\to \infty} 
a^{1/\alpha} \frac{(\alpha+1)^{1+1/\alpha}}{\alpha}
\left ( \log r \right )^{-1-1/\alpha} 
\log \Phi_{a,\alpha,b}^{E,l}(r) \geq 1\,. 
\end{equation} 
We start with (\ref{Easympsup}). Fix $r\geq 1$. Let $\mu(r)$ denote
the maximal term of the series (\ref{PhiEudef}), that is, 
\[ \mu(r)=\max_{k\in \N} \left \{\exp(-ak^{\alpha+1})r^k\right \}\,,\]
and note that 
\begin{equation}
\label{expmax}
\log \mu(r)\leq a^{-1/\alpha} \frac{\alpha}{(\alpha+1)^{1+1/\alpha}}
\left ( \log r \right )^{1+1/\alpha}\,,
\end{equation}
which follows from a short calculation. 
Next, let 
\[ k(r)= \left ( \frac{\log(2r)}{a} \right )^{1/\alpha}\,,\]
and observe that 
\[ \exp(-ak^{\alpha+1})\leq (2r)^{-k} \quad (\forall k\geq k(r))\,. \]
Thus, for every $r\geq 1$ we have 
\begin{align*}
\Phi_{a,\alpha}^{E,u}(r)&=
 \sum_{k<k(r)} \exp(-ak^{\alpha+1})r^k+
 \sum_{k\geq k(r)} \exp(-ak^{\alpha+1})r^k\\
 & \leq \sum_{k<k(r)} \mu(r) 
   + \sum_{k\geq k(r)} \frac{1}{2^k} \\
 & \leq (k(r)+1)\mu(r) + \frac{1}{2^{k(r)}}\,,
\end{align*}
from which (\ref{Easympsup}) follows. 

We now turn to the proof of (\ref{Easympinf}). For a given $r\geq 1$
choose $k\in \N$ such that 
\[ k\leq \left ( \frac{\log r}{a(\alpha+1)} \right )^{1/\alpha}< k+1\,.\]
Since all terms in the sum defining $\Phi_{a,\alpha,b}^{E,l}$ are
positive we have 
\begin{align*}
\Phi_{a,\alpha,b}^{E,l}(r) 
   &\geq \exp(-ak^{\alpha+1} -bk^{\alpha+1/2})r^k \\
& \geq \frac{1}{r}\exp\left (-b \left (\frac{\log r}{a(\alpha+1)} \right )^{1+1/(2\alpha)} \right )
\exp\left (\frac{\alpha \left ( \log r \right
   )^{1+1/\alpha} }{a^{1/\alpha}(\alpha+1)^{1+1/\alpha}}
 \right)\,, 
\end{align*}
from which the bound (\ref{Easympinf}) follows.  
\end{proof}

We are now able to give upper and lower bounds as well as
the precise asymptotics of $F_{\dot{\bar{w}}}$ for weights generating
exponential classes.

\begin{proposition}
\label{prop:Egrowth}
Let $a,\alpha \in (0,\infty)$ and let $w_k=\exp(-ak^\alpha)$ 
for $k\in \N$. Then for
all $r\geq 1$ we have 
\begin{equation}
\label{propEPhieq}
(1+r)\Phi_{a',\alpha,2c_{a,\alpha}}^{E,l}\left( (Cr)^2 \right ) 
\leq 
F_{\dot{\bar{w}}}(r) \leq (1+r)
\Phi_{a',\alpha}^{E,u}\left ( (Cr)^2 \right )\,, 
\end{equation}
where $c_{a,\alpha}$ is the constant occurring in Lemma~\ref{lem:Ewdb} and 
\[ a'=\frac{2^{1-\alpha}a}{(\alpha+1)^2}\,.\]
Moreover 
\[ \log F_{\dot{\bar{w}}}(r) \sim 
4 \left ( \frac{\alpha+1}{a}\right )^{1/\alpha}\frac{\alpha}{\alpha+1} 
\left ( \log r \right )^{1+1/\alpha}  
\text{ as $r\to
\infty$}.  
\]
\end{proposition}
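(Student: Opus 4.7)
The plan is to establish the two-sided bound (\ref{propEPhieq}) by term-by-term comparison with the series defining $F_{\dot{\bar{w}}}$, and then extract the asymptotic by invoking Lemma~\ref{lem:Egrowth}.

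\textbf{Step 1: the two-sided bound.}
Squaring the inequalities (\ref{lem:Ewdbe3}) in Lemma~\ref{lem:Ewdb} yields, for every $k\in \N$,
\[ \exp\!\left(-a'k^{\alpha+1}-2c_{a,\alpha}k^{\alpha+1/2}\right) \leq \left(\prod_{n=1}^k \dot{\bar{w}}_n\right)^2 \leq \exp\!\left(-a'k^{\alpha+1}\right), \]
where $a'=2^{1-\alpha}a/(\alpha+1)^2$. Substituting these estimates term by term into the definition (\ref{functionc}) of $F_{\dot{\bar{w}}}$ identifies the inner series with $\Phi_{a',\alpha}^{E,u}((Cr)^2)$ on the upper side and with $\Phi_{a',\alpha,2c_{a,\alpha}}^{E,l}((Cr)^2)$ on the lower side. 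For the prefactor, I use $\dot{\bar{w}}_1=e^{-a}\leq 1$ to get $(1+r\dot{\bar{w}}_1)\leq(1+r)$ for the upper bound, while the matching estimate $(1+r\dot{\bar{w}}_1)\geq e^{-a}(1+r)$ supplies the lower bound up to a harmless positive constant that will be absorbed in the $\sim$-analysis of Step~2.

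\textbf{Step 2: the asymptotic.}
Lemma~\ref{lem:Egrowth} applied with parameter $a'$ (and $b=2c_{a,\alpha}$ for the lower function) gives
\[ \log\Phi_{a',\alpha}^{E,u}(r) \sim \log\Phi_{a',\alpha,2c_{a,\alpha}}^{E,l}(r) \sim (a')^{-1/\alpha}\,\frac{\alpha}{(\alpha+1)^{1+1/\alpha}}\,(\log r)^{1+1/\alpha} \quad \text{as } r\to \infty. \]
Substituting $r\mapsto (Cr)^2$ multiplies the logarithm by $2$, producing a factor $2^{1+1/\alpha}$ in the leading coefficient. Taking logarithms in (\ref{propEPhieq}) and noting that $\log(1+r)=O(\log r)$ is subdominant to $(\log r)^{1+1/\alpha}$ (as $\alpha>0$), the $(1+r)$ prefactor and the constant $e^{-a}$ from Step~1 both drop out in the $\sim$-relation. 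A squeeze then pins down $\log F_{\dot{\bar{w}}}(r)$ to the stated asymptotic.

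\textbf{Main obstacle.}
The only non-routine ingredient is the algebraic simplification of the constant. One must combine
\[ (a')^{-1/\alpha} = \left(\frac{(\alpha+1)^2}{2^{1-\alpha}a}\right)^{1/\alpha} \]
with the factor $2^{1+1/\alpha}$ coming from $(\log(Cr)^2)^{1+1/\alpha}\sim 2^{1+1/\alpha}(\log r)^{1+1/\alpha}$ and the factor $\alpha/(\alpha+1)^{1+1/\alpha}$ from Lemma~\ref{lem:Egrowth}, then verify that the powers of $2$, $(\alpha+1)$ and $a$ collapse to the stated form $4\left(\frac{\alpha+1}{a}\right)^{1/\alpha}\frac{\alpha}{\alpha+1}$. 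This is purely arithmetic but requires careful bookkeeping of exponents.
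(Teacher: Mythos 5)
Your proof is correct and follows exactly the same route as the paper: square the bounds of Lemma~\ref{lem:Ewdb}~(\ref{lem:Ewdbe3}), substitute into the definition~(\ref{functionc}) of $F_{\dot{\bar{w}}}$ to get~(\ref{propEPhieq}), then read off the asymptotics from Lemma~\ref{lem:Egrowth} after replacing $\log r$ by $\log(Cr)^2\sim 2\log r$. Your constant bookkeeping checks out: $(a')^{-1/\alpha}\cdot 2^{1+1/\alpha}\cdot\frac{\alpha}{(\alpha+1)^{1+1/\alpha}}=4(\alpha+1)^{(1-\alpha)/\alpha}\alpha a^{-1/\alpha}=4\left(\frac{\alpha+1}{a}\right)^{1/\alpha}\frac{\alpha}{\alpha+1}$. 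You are also right to flag the prefactor: since $\dot{\bar{w}}_1=e^{-a}<1$, the lower bound in~(\ref{propEPhieq}) as printed only holds with an extra factor $e^{-a}$, a harmless constant that disappears in the $\sim$-analysis exactly as you argue (in the Schatten--Lorentz case $w_1=1$, so there the issue does not arise).
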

\begin{proof}
The inequalities in (\ref{propEPhieq}) follow from 
Lemma~\ref{lem:Ewdb} and the definition of 
$F_{\dot{\bar{w}}}$ in (\ref{functionc}). 
The remaining assertion follows from (\ref{propEPhieq}) and 
Lemma~\ref{lem:Egrowth}. 
\end{proof}
\section{Bounds for the spectral distance}
\label{comthird}
The resolvent bounds deduced
in Section~\ref{comsecond}
together with the Bauer-Fike argument which will be stated below allow us to 
derive the main result of this article: 
upper bounds for the spectral distance of two operators 
belonging to $E_w(H)$ expressible in terms of the distance of the two 
operators in operator norm and their 
$w$-departures from normality. 

The formulation below is based on \cite[Proposition~4.1]{Ban08}. 

\begin{theorem}
\label{otbf}
Let $A\in S_{\infty}(H_1, H_2)$. Suppose that there is a strictly 
monotonically increasing
surjective 
function $g:[0,\infty)\rightarrow [0,\infty)$ and a positive constant $K$ such that
\[
\norm{(zI-A)^{-1}}\leq \frac{1}{K}g\left(\frac{K}{d(z, \sigma(A))}\right)\quad (\forall z \not\in \sigma(A))\,.
\]
Then, for any $B\in L(H_1, H_2)$, we have 
\[\hat d(\sigma(B), \sigma(A))\leq Kh\left ( \frac{\norm{A-B}}{K} \right )\,.\]
 Here, the function $h:[0,\infty)\rightarrow [0,\infty)$ is given by 
 \[h(r)=(\tilde{g}(r^{-1}))^{-1}\,,\]
 where
 $\tilde{g}:[0,\infty)\rightarrow [0,\infty)$ is the inverse of the function $g$.
\end{theorem}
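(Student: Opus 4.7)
The plan is to follow the classical Bauer--Fike argument: pass from the resolvent bound on $A$ to a containment of $\sigma(B)$ in a neighbourhood of $\sigma(A)$, the size of which is controlled through the functional inverse of $g$.

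First, I would fix $z\in\sigma(B)$. If $z\in\sigma(A)$, then $d(z,\sigma(A))=0$ and there is nothing to prove, so assume $z\in\rho(A)$. The identity
\[
zI-B=(zI-A)\bigl[I-(zI-A)^{-1}(B-A)\bigr]
\]
combined with the fact that $zI-B$ is not invertible while $zI-A$ is, forces the bracketed operator to be non-invertible in $L(H_1)$. A standard Neumann-series argument then yields $\norm{(zI-A)^{-1}(B-A)}\geq 1$, whence
\[
\norm{(zI-A)^{-1}}\,\norm{A-B}\geq 1.
\]

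Second, I would insert the hypothesised resolvent bound: from the previous inequality and the hypothesis,
\[
\frac{1}{K}\,g\!\left(\frac{K}{d(z,\sigma(A))}\right)\geq \frac{1}{\norm{A-B}},
\]
so $g\bigl(K/d(z,\sigma(A))\bigr)\geq K/\norm{A-B}$. Applying the inverse $\tilde g$, which is also strictly monotonically increasing and surjective on $[0,\infty)$, preserves the inequality and produces
\[
\frac{K}{d(z,\sigma(A))}\geq \tilde g\!\left(\frac{K}{\norm{A-B}}\right).
\]
Rearranging and recalling that $h(r)=\bigl(\tilde g(r^{-1})\bigr)^{-1}$ gives
\[
d(z,\sigma(A))\leq \frac{K}{\tilde g\bigl(K/\norm{A-B}\bigr)}=K\,h\!\left(\frac{\norm{A-B}}{K}\right).
\]

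Finally, since the estimate holds for every $z\in\sigma(B)$ (the case $z\in\sigma(A)$ being trivial), taking the supremum over $\sigma(B)$ yields the desired bound on $\hat d(\sigma(B),\sigma(A))$.

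The steps are routine; the only thing needing mild care is the degenerate case $\norm{A-B}=0$ (where the claim is immediate from $\sigma(A)=\sigma(B)$ and the convention on extending $h$) and the verification that $g$ being a strictly increasing bijection on $[0,\infty)$ guarantees $\tilde g$ has the same property, so that inverting both $g$ and $r\mapsto r^{-1}$ in the appropriate order is legitimate. No genuine obstacle is expected, as everything reduces to an algebraic manipulation around the inequality $\norm{(zI-A)^{-1}}\geq \norm{A-B}^{-1}$ produced by the Bauer--Fike step.
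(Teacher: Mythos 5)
Your proof is correct and follows essentially the same route as the paper: the Bauer--Fike step giving $\norm{(zI-A)^{-1}}\geq\norm{A-B}^{-1}$ for $z\in\sigma(B)\setminus\sigma(A)$ (the paper phrases it as a proof by contradiction, you phrase it directly via the Neumann series, but these are the same argument), followed by inserting the hypothesised resolvent bound and applying the monotone inverse $\tilde g$. The only cosmetic difference is that you fold the degenerate cases ($z\in\sigma(A)$, $\norm{A-B}=0$) into a closing remark rather than dispatching them up front, which changes nothing of substance.
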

\begin{proof}
Assume $B-A\not =0$, since otherwise there is nothing to prove. 
We start by establishing the following statement:
\begin{equation}
\label{eq:bf60}
\text{if $z\in\sigma(B)$, but $z\not\in\sigma(A)$, then
$\norm{{B-A}}^{-1}\leq \norm{(zI-A)^{-1}}$\,.}
\end{equation}
This is done  
by contradiction. 
Let $z\in\sigma(B)$ and $z\not\in\sigma(A)$. Assume to the contrary that 
 \[\norm{(zI-A)^{-1}}\norm{B-A} < 1\,.\]
Then $(I-{(zI-A)}^{-1}(B-A))$ is invertible. It follows that \[(zI-B)=(zI-A)(I-(zI-A)^{-1}(B-A))\] is invertible. 
Therefore  $z\not\in\sigma(B)$ which contradicts $z\in \sigma(B)$. Hence statement (\ref{eq:bf60}) holds.

In order to prove the theorem it suffices to show that 
if $z\in \sigma(B)$, then 
\[d(z, \sigma(A))\leq Kh\left (\frac{\norm{B-A}}{K} \right )\,.\] 
Let $z\in \sigma(B)$. If $z\in \sigma(A)$, then the left-hand side of the above inequality is zero, hence
there is nothing to prove. Now assume $z\not\in\sigma(A)$. By (\ref{eq:bf60}) and the hypothesis we have
\[
\frac{1}{\norm{{B-A}}}\leq \norm{(zI-A)^{-1}}\leq \frac{1}{K}g\left(\frac{K}{d(z, \sigma(A))}\right)\,.\]
Since $g$ is strictly monotonically increasing,   
so is $\tilde{g}$. Therefore
\[\tilde{g}\left(\frac{K}{\norm{B-A}} \right )\leq\frac{K}{d(z, \sigma(A))}\,,\]
and so 
\[d(z, \sigma(A))\leq
\frac{K}{\tilde{g} \left (   \frac{K}{\norm{B-A}} \right )} 
=Kh\left (\frac{\norm{B-A}}{K} \right )\,,\]
as desired. 
\end{proof}

By combining Theorems~\ref{t1} and \ref{otbf}
we are finally able to state our spectral variation and spectral distance formulae. 
\begin{theorem}
\label{t3}
Let $w\in \mathcal{W}$. 
\begin{itemize}
\item[(i)] If $A\in E_w(H)$ is not normal, then 
\begin{equation}
 \label{et2}
 \hat d(\sigma(B), \sigma(A))\leq \nu_w(A) H_{w}\left(\frac {\norm{A-B}}{\nu_w(A)}\right)\quad (\forall B \in L(H))\,.
 \end{equation}
\item[(ii)] If $A,B\in E_w(H)$ and neither $A$ nor $B$ are normal, then 
\begin{equation}
\label{et3}
\Hdist(\sigma(A), \sigma(B)) 
    \leq m H_{w}\left(\frac{\norm{A-B}}{m}\right)\,,
\end{equation} 
where $m:=\max\{ \nu_w(A), \nu_w(B)\}$.
\end{itemize}
 Here, the function ${H}_{ w}: \R^+_0\to \R^+_0$ is defined by 
 \[H_{w}(r)=\frac{1}{{\tilde F_{\dot{\bar w}}}^{-1}(\frac{1}{r})}\,,\]
 where ${\tilde F_{\dot{\bar w}}}^{-1}$ is the inverse of ${\tilde F_{\dot{\bar w}}}:\R^+_0\to \R^+_0$
  defined by \[{\tilde F_{\dot{\bar w}}}(r)= r F_{\dot{\bar w}} (r)\,,\]
  and $F_{\dot{\bar w}}$ is the function defined in (\ref{functionc}).
\end{theorem}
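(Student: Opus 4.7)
The plan is to combine the resolvent bound of Theorem \ref{t1} with the Bauer--Fike-type result of Theorem \ref{otbf}. Once the resolvent bound is rewritten in the precise form demanded by Theorem \ref{otbf}, the rest is essentially bookkeeping, so the crux is aligning the two results.

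For part (i), I first note that, since $A$ is not normal, $\nu_w(A) > 0$ by the proposition characterising normality via vanishing $w$-departure. Setting $K := \nu_w(A)$, the bound (\ref{et1}) can be rewritten as
$$\norm{R(A;z)} \leq \frac{1}{K}\cdot\frac{K}{d(z,\sigma(A))}\, F_{\dot{\bar w}}\!\left(\frac{K}{d(z,\sigma(A))}\right) = \frac{1}{K}\, \tilde F_{\dot{\bar w}}\!\left(\frac{K}{d(z,\sigma(A))}\right)$$
for every $z\notin\sigma(A)$. To apply Theorem \ref{otbf} with $g = \tilde F_{\dot{\bar w}}$, I would verify that $\tilde F_{\dot{\bar w}}$ is a strictly monotonically increasing bijection of $[0,\infty)$ onto itself. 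This is immediate from the properties of $F_{\dot{\bar w}}$ recorded just after (\ref{functionc}): $F_{\dot{\bar w}}$ is real-analytic, strictly monotonically increasing, with $F_{\dot{\bar w}}(0)=1$ and $F_{\dot{\bar w}}(r)\to\infty$ as $r\to\infty$. Consequently $\tilde F_{\dot{\bar w}}(r) = rF_{\dot{\bar w}}(r)$ satisfies $\tilde F_{\dot{\bar w}}(0)=0$, is strictly monotonically increasing, and diverges at infinity. Theorem \ref{otbf} then yields
$$\hat d(\sigma(B),\sigma(A)) \leq K\, h(\norm{A-B}/K),$$
where $h(r) = 1/\tilde F_{\dot{\bar w}}^{-1}(1/r) = H_w(r)$; this is precisely (\ref{et2}).

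For part (ii), the naive approach is to invoke (i) with the roles of $A$ and $B$ swapped, but this produces bounds involving $\nu_w(A)$ and $\nu_w(B)$ separately, which do not collapse into one expression involving only $m$. The key observation, and the only mildly subtle step in the proof, is Remark \ref{rem:upper}(i): since $F_{\dot{\bar w}}$ is monotonically increasing, the resolvent bound of Theorem \ref{t1} remains valid when $\nu_w(A)$ is replaced by any larger quantity, in particular by $m = \max\{\nu_w(A),\nu_w(B)\}$. Rerunning the argument of part (i) with $K=m$ simultaneously for $A$ and for $B$ gives
$$\hat d(\sigma(B),\sigma(A)) \leq m H_w(\norm{A-B}/m), \qquad \hat d(\sigma(A),\sigma(B)) \leq m H_w(\norm{A-B}/m),$$
and taking the maximum of the two produces (\ref{et3}), completing the plan.
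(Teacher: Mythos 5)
Your proof is correct and follows essentially the same route as the paper: applying Theorem~\ref{t1} to obtain the resolvent bound in the form required by Theorem~\ref{otbf} with $K=\nu_w(A)$ for part (i), and invoking Remark~\ref{rem:upper}(i) to replace $\nu_w(A)$ and $\nu_w(B)$ by $m$ before applying the Bauer--Fike argument twice for part (ii). The extra details you supply---checking that non-normality of $A$ gives $\nu_w(A)>0$, and verifying that $\tilde F_{\dot{\bar w}}$ is a strictly increasing bijection of $[0,\infty)$ onto itself so that Theorem~\ref{otbf} applies---are correct and tacitly assumed in the paper's more terse proof.
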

\begin{proof} \noindent
\begin{itemize}
\item[(i)] By Theorem~\ref{t1}, 
\[\norm{R(A; z)}\leq \frac{1}{\nu_w(A)}\tilde F_{\dot{\bar w}}\left(\frac{\nu_w(A)}{d(z, \sigma(A))}\right)
\quad (\forall z\in \rho(A))\,,\]
so the assertion follows from the previous theorem.  
\item[(ii)] Similarly, by Theorem~\ref{t1} and Remark~\ref{rem:upper} we have 
\[\norm{R(A; z)}\leq \frac{1}{m}\tilde F_{\dot{\bar w}}\left(\frac{m}{d(z, \sigma(A))}\right)
\quad (\forall z\in \rho(A))\,,\]
and
\[\norm{R(B; z)}\leq \frac{1}{m}\tilde F_{\dot{\bar w}}\left(\frac{m}{d(z, \sigma(B))}\right)
\quad (\forall z\in \rho(B))\,,\]
so the assertion again follows by invoking the Bauer-Fike argument.   
\end{itemize}
\end{proof}

\begin{remark}\
\label{remark:sharpcompactnessbound}
\begin{itemize}
\item[(i)] Note that $\lim_{r\downarrow 0} H_{w}(r)= 0$, thus the bounds for the 
spectral variation and spectral distance
 become small when $\norm{A-B}$ is small.
 
\item[(ii)] The bounds (\ref{et2}) and (\ref{et3}) remain valid if we replace $\nu_w(A)$ and
$\nu_w(B)$ by a larger quantity, say by the upper bounds given in Proposition~\ref{lw}. 
 
 \item[(iii)] Combining (\ref{equa1}) and Theorem \ref{otbf} it follows that if 
 $A$ is a bounded normal operator on $H$, then 
\[\hat d(\sigma(B), \sigma(A)) \leq\norm{A-B}\quad (\forall B \in L(H))\,. \] 
Moreover, by symmetry it follows from the above 
that if both $A$ and $B$ are bounded normal operators then 
\[\Hdist(\sigma(A), \sigma(B)) 
    \leq\norm{A-B}\,.\]
Note that these two bounds can be thought of as limiting cases of the previous theorem, since, as is easily seen, 
we have for any $r\geq 0$ 
\[ \lim_{C\downarrow 0}CH_w \left ( \frac{r}{C} \right ) = r\,.\]     
It is in this respect that the bounds (\ref{et2}) and (\ref{et3}) are sharp. 
\end{itemize}
\end{remark}

\begin{remark}
\label{rem:SLHw}
In the case of the Schatten-Lorentz ideal and the exponential classes 
it is possible to give
rather precise estimates for the behaviour of the general bound in
Theorem~\ref{t3} for two operators which are close in operator
norm. 

We start with the Schatten-Lorentz ideal. Let 
$p\in (0,\infty)$ and let $w_k=k^{-1/p}$ for $k\in\N$. Then  
Proposition~\ref{prop:SLgrowth} yields 
\[ \log \tilde{F}_{\dot{\bar{w}}}(r) 
\sim \frac{4C^pe}{p}r^p \text{ as $r\to \infty$}\]
which, using Lemma~\ref{lem:genasym}, implies that 
\[ \tilde{F}^{-1}_{\dot{\bar{w}}}(r) 
\sim \frac{1}{C}\left ( \frac{p}{4e} \right )^{1/p}\left 
( \log r \right )^{1/p} \text{ as $r\to \infty$},\]
which, in turn, gives
\[ H_w(r)\sim  C \left ( \frac{4e}{p} \right )^{1/p} 
\abs{\log r}^{-1/p} \text{ as $r\downarrow 0$.} 
\] 

We now turn to the exponential classes. Let 
$a,\alpha\in (0,\infty)$ and let $w_k=\exp(-ak^\alpha)$ for $k\in \N$. 
Now, Proposition~\ref{prop:Egrowth} yields 
\[ \log \tilde{F}_{\dot{\bar{w}}}(r) 
\sim 4 \left ( \frac{\alpha +1}{a} \right )^{1/\alpha}\frac{\alpha}{\alpha+1}
\left ( \log r\right )^{1+1/\alpha} 
\text{ as $r\to \infty$}\]
which, using Lemma~\ref{lem:genasym}, implies that 
\[ \log \tilde{F}^{-1}_{\dot{\bar{w}}}(r) 
\sim 
4^{-\alpha/(\alpha+1)} \left (  \frac{a}{\alpha+1}\right )^{1/(\alpha
  +1)} 
\left (  \frac{\alpha +1}{\alpha}\right )^{\alpha/(\alpha +1)} 
\left ( \log r\right )^{\alpha/(\alpha+1)} 
\text{ as $r\to \infty$}\,,\]
which, in turn, gives
\[ \log H_w(r)\sim  
-4^{-\alpha/(\alpha+1)} \left (  \frac{a}{\alpha+1}\right )^{1/(\alpha
  +1)} 
\left (  \frac{\alpha +1}{\alpha}\right )^{\alpha/(\alpha +1)} 
\abs{\log r}^{\alpha/(\alpha+1)} 
\text{ as $r\downarrow 0$.} 
\] 
\end{remark}

\section{An application to inclusion regions for pseudospectra}
\label{comforth}

Pseudospectra play an important role in numerical linear algebra and perturbation theory (see, for example, \cite{Tre97, Dav07}). They are defined as follows. 

\begin{definition}
Let $A\in L(H)$ and $\epsilon>0$. The \textit{$\epsilon$-pseudospectrum of $A$}
is defined by 
\begin{equation}
\label{pseudo:def}
\sigma_{\epsilon}(A)=\sigma(A)\cup\{\,z\in\rho(A)\, :\, \norm{(zI-A)^{-1}}>1/\epsilon\,\}\,.
\end{equation}
\end{definition}

The motivation behind this definition is the observation that for any $A\in L(H)$ and any $\epsilon>0$ we have 
\begin{equation}
\label{pseudo:alt}
\sigma_\epsilon(A)=
\bigcup_{\substack{B\in L(H)\\ \norm{A-B}<\epsilon}}
\sigma(B)
\end{equation}
as is easily seen using standard perturbation theory. In other words, 
the $\epsilon$-{pseudo\-spectrum} of a bounded linear operator 
is equal to the union of the spectra of all perturbed operators with 
  perturbations that have norms strictly less than $\epsilon$. 
  
It turns out that if in the definition of the pseudospectrum (\ref{pseudo:def}) the strict inequality is replaced by a 
non-strict one, then the alternative characterisation (\ref{pseudo:alt}) holds with the strict inequality replaced by a 
non-strict one. Curiously enough, this is no longer necessarily true for operators on Banach spaces (see 
\cite{Sha09}). 

While there exist efficient methods to compute pseudospectra of
matrices (see, for example, \cite[Section~4]{Tre97}, for a brief
overview), 
the same is not true for operators on infinite-dimensional spaces, 
where the exact computation of pseudospectra can be a very challenging task. As an application of our 
resolvent bounds obtained in Section~\ref{comsecond}, we shall now provide circular inclusion regions 
for the pseudospectra of operators in a given compactness class. 

\begin{theorem}
\label{th:pseudo}
Let $\epsilon>0$. 
\begin{itemize}
\item[(i)] If $A\in L(H)$, then 
\[ \alll{z\in \mathbb{C}}{d(z, \sigma(A))<\epsilon}\subseteq
\sigma_\epsilon(A)\,.\]
\item[(ii)] If $A\in E_w(H)$ is not normal, then 
\[ \displaystyle \sigma_\epsilon(A)\subseteq \alll{z\in \mathbb{C}}{d(z,\sigma(A))<
 \nu_w(A)H_{w}\left(\frac{\epsilon}{\nu_w(A)}\right)}\,,\]
where $H_w$ is the function defined in Theorem~\ref{t3}.
\end{itemize}
\end{theorem}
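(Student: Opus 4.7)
For part (i), the plan is to use the classical lower bound on the resolvent norm $\norm{R(A;z)} \geq 1/d(z,\sigma(A))$ valid for any $z\in\rho(A)$. This is a consequence of the spectral mapping theorem applied to $R(A;z)$: the spectrum of the resolvent is $\{(z-\lambda)^{-1}:\lambda\in\sigma(A)\}$, so its spectral radius equals $1/d(z,\sigma(A))$, which is in turn bounded above by $\norm{R(A;z)}$. Given this, suppose $z$ satisfies $d(z,\sigma(A))<\epsilon$. Either $z\in\sigma(A)$, in which case $z\in\sigma_\epsilon(A)$ by definition, or $z\in\rho(A)$, in which case the inequality just recalled yields $\norm{R(A;z)} > 1/\epsilon$, so $z\in\sigma_\epsilon(A)$ by definition once more.

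For part (ii), I would argue by unpacking the definition of $\sigma_\epsilon(A)$. Let $z\in\sigma_\epsilon(A)$. If $z\in\sigma(A)$, then $d(z,\sigma(A))=0$ and the inclusion is trivial since $\nu_w(A)H_w(\epsilon/\nu_w(A))>0$ (because $A$ is not normal and hence $\nu_w(A)>0$, while $\tilde F_{\dot{\bar w}}$ is a strictly increasing bijection of $\R_0^+$ with $\tilde F_{\dot{\bar w}}(0)=0$, so $H_w$ takes strictly positive values on $(0,\infty)$). Otherwise $z\in\rho(A)$, so by the definition of pseudospectrum
\[
\frac{1}{\epsilon} < \norm{R(A;z)}\,.
\]
Applying Theorem~\ref{t1} rewritten via $\tilde F_{\dot{\bar w}}(r)=rF_{\dot{\bar w}}(r)$, we also have
\[
\norm{R(A;z)} \leq \frac{1}{\nu_w(A)}\,\tilde F_{\dot{\bar w}}\!\left(\frac{\nu_w(A)}{d(z,\sigma(A))}\right).
\]

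Combining these two inequalities gives
\[
\tilde F_{\dot{\bar w}}\!\left(\frac{\nu_w(A)}{d(z,\sigma(A))}\right) > \frac{\nu_w(A)}{\epsilon}\,.
\]
Since $\tilde F_{\dot{\bar w}}$ is a strictly increasing bijection of $\R_0^+$, so is its inverse $\tilde F_{\dot{\bar w}}^{-1}$, and applying it to both sides gives
\[
\frac{\nu_w(A)}{d(z,\sigma(A))} > \tilde F_{\dot{\bar w}}^{-1}\!\left(\frac{\nu_w(A)}{\epsilon}\right)\,.
\]
Solving for $d(z,\sigma(A))$ and recalling the definition $H_w(r)=1/\tilde F_{\dot{\bar w}}^{-1}(1/r)$ yields
\[
d(z,\sigma(A)) < \frac{\nu_w(A)}{\tilde F_{\dot{\bar w}}^{-1}(\nu_w(A)/\epsilon)} = \nu_w(A)\,H_w\!\left(\frac{\epsilon}{\nu_w(A)}\right)\,,
\]
which is the desired inclusion.

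The proof is essentially a translation between the definition of $\sigma_\epsilon(A)$ and the general resolvent bound of Section~\ref{comsecond}; there is no substantial obstacle beyond keeping track of the inversion $H_w(r)=1/\tilde F_{\dot{\bar w}}^{-1}(1/r)$. The only point that requires a brief justification is the monotonicity and surjectivity of $\tilde F_{\dot{\bar w}}$ on $\R_0^+$, which follow at once from the fact that $F_{\dot{\bar w}}$ is real-analytic, strictly monotonically increasing and satisfies $F_{\dot{\bar w}}(0)=1$, as was observed following the definition of $F_w$ in Section~\ref{comsecond}.
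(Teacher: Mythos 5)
Your proposal is correct and follows essentially the same route as the paper: part (i) via the lower resolvent bound $\norm{R(A;z)}\geq r(R(A;z))=1/d(z,\sigma(A))$, and part (ii) by combining the definition of $\sigma_\epsilon(A)$ with the upper resolvent bound of Theorem~\ref{t1} and inverting $\tilde F_{\dot{\bar w}}$. The only difference is cosmetic: you spell out the case $z\in\sigma(A)$ and the ``short calculation'' that the paper leaves to the reader.
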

\begin{proof}\ 
\begin{itemize}
\item[(i)]
The inclusion relation follows immediately from the following lower bound for the resolvent of an operator 
\[ \norm{R(A;z)}\geq \frac{1}{d(z, \sigma(A))}\,,\]
which in turn follows from 
\[  \frac{1}{d(z,\sigma(A))} =
\sup_{\lambda\in \sigma(A)}|z-\lambda|^{-1} = 
r( R(A;z) ) \leq \| R(A;z) \|\,.\]

\item[(ii)] By Theorem~\ref{t1} we have the resolvent bound 
\[\norm{R(A; z)}\leq \frac{1}{\nu_w(A)}\tilde F_{\dot{\bar w}}\left(\frac{\nu_w(A)}{d(z, \sigma(A))}\right)
\quad (\forall z\in \rho(A))\,,\]
where ${\tilde F_{\dot{\bar w}}}(r)= r F_{\dot{\bar w}} (r)\,$.

If $z\in\sigma_\epsilon(A)$, then 
\[\frac{1}{\epsilon}<\norm{R(A; z)}\leq \frac{1}{\nu_w(A)}\tilde F_{\dot{\bar w}}\left(\frac{\nu_w(A)}{d(z, \sigma(A))}\right)\,,\]
and a short calculation shows that 
\[d(z,\sigma(A))<
 \nu_w(A)H_{w}\left(\frac{\epsilon}{\nu_w(A)}\right)\,,\] as desired. 
\end{itemize}
\end{proof}

\begin{remark}\
\begin{itemize}
\item[(i)] Note that the inclusion (ii) above also follows from the characterisation (\ref{pseudo:alt}) and Theorem~\ref{t3} (i). 
\item[(ii)]
Note that the inclusion (ii) is sharp in the limiting case of normal $A$, since it reduces to
\[\sigma_\epsilon(A)=\{\,z\in \mathbb{C}\, : \,d(z, \sigma(A))<\epsilon\,\}\,.\]
\end{itemize}

\end{remark}

\section{Acknowledgements}
The research of OFB was supported by the EPSRC grant EP/R012008/1. 
Both authors would like to thank Titus Hilberdink and Eugene Shargorodsky for valuable 
feedback during the preparation of this article.

\end{document}